\titlespacing{\section}{10pt}{10pt}{10pt} % this reduces space between (sub)sections to 0pt, for example
\tikzset{snake it/.style={decorate, decoration=snake}}
\newcommand{\tpmod}[1]{{\@displayfalse\pmod{#1}}}
\newcommand{\Sym}[1]{\mathrm{Sym}({#1})}
\newtheorem{theorem}{Theorem}[section]
\newtheorem{lemma}[theorem]{Lemma}
\newtheorem{definition}[theorem]{Definition}
\newtheorem{proposition}[theorem]{Proposition}
\theoremstyle{definition}
\title{\bf{The Bruhat Order of a Finite Coxeter Group and Elnitsky Tilings}}
\author{Rob Nicolaides and Peter Rowley}
\begin{document}

\maketitle
\begin{abstract} Suppose that $W$ is a finite Coxeter group and $W_J$ a standard parabolic subgroup of $W$.  The main result proved here is that for any for any $w \in W$ and reduced expression of $w$ there is an Elnitsky tiling of a $2m$-polygon, where $m = [W : W_J]$.  The proof is constructive and draws together the work on E-embedding in \cite{nicolaidesrowley1} and the deletion order in \cite{nicolaidesrowley3}. Computer programs which produce such tilings may be downloaded from \cite{github} and here we also present examples of the tilings for, among other Coxeter groups,  the exceptional Coxeter group $\mathrm{E}_8$.

\end{abstract}

\section{Introduction}\label{Section introduction}

In his PhD thesis, \cite{elnitskyThesis} (see also \cite{elnitsky}), Elnitsky examined a fascinating relationship between the reduced words of a selection of Coxeter groups and certain tilings of polygons. More specifically, for each of the families of Coxeter groups of types A, B and D, he produces an elegant bijection between classes of their reduced words and particular tilings of polygons.  These, now referred to as Elnitsky tilings,  were produced using the degree $n$ permutation representation for the Coxeter group $\mathrm{A}_{n-1}$ and the signed permutation representations for types B and D. The methods revealed in this paper make use of any of the faithful permutation representations on the cosets of standard parabolic subgroups of any finite Coxeter group, so yielding an extensive collection of tilings,  with the tilings in \cite{elnitskyThesis}  as a special case. Our main result is as follows.

\begin{theorem}\label{maintheorem} Suppose that $W$ is a finite Coxeter group with $W_J$ a standard parabolic subgroup of $W$ which is core-free. Set $m = [W : W_J]$. Then for any $w \in W$ and reduced expression of $w$ there is an Elnitsky tiling of a $2m$-polygon.

\end{theorem}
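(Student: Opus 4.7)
My plan is to proceed by induction on the length of $w$, using the E-embedding of \cite{nicolaidesrowley1} to set up the $2m$-polygon and the deletion order of \cite{nicolaidesrowley3} to dictate which rhombus is appended at each step. The base case $w=e$ is immediate: the E-embedding assigns to each of the $m$ cosets in $W/W_J$ a vector in an ambient Euclidean space, and arranging these $m$ vectors with their negatives cyclically (in a prescribed order) produces a convex $2m$-gon whose two boundary paths from the bottom vertex to the top are identical; the tiling is empty. The core-free hypothesis is exactly what ensures the coset action is faithful and the $m$ vectors are distinct, so that the polygon is non-degenerate.

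For the inductive step, write the reduced expression as $w = w' s_{i_\ell}$ and assume a tiling $T'$ has been built, whose right boundary path $P'$ reads off, in order, the embedded vectors of the cosets $W_Jx_1,\ldots,W_Jx_m$ permuted by $w'$. The key fact supplied by the deletion order machinery is that right multiplication by $s_{i_\ell}$ transposes exactly one adjacent pair $(x_k,x_{k+1})$ in this list, the pair whose positions are exchanged by the image of $s_{i_\ell}$ under the coset action. Geometrically, the two corresponding edges of $P'$ meet at a vertex of $P'$, and translating each along the other produces a rhombus; attaching it to $T'$ yields the boundary path $P$ appropriate for $w$ and the candidate tiling $T$.

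The main obstacle is verifying two non-degeneracy conditions at every stage: (i) the attached rhombus has strictly positive area, i.e.\ the two swapped vectors are linearly independent; and (ii) this rhombus fits into a convex corner of $P'$ and does not overlap any previously placed tile. For (i), if the two embedded vectors were parallel, the E-embedding properties from \cite{nicolaidesrowley1} would force $s_{i_\ell}$ to fix the coset pair, which would contradict the hypothesis that the expression $s_{i_1}\cdots s_{i_\ell}$ is reduced. For (ii), the deletion order of \cite{nicolaidesrowley3} is precisely what guarantees adjacency of the swapped pair along $P'$ at every step, so the corners at which rhombi are appended march monotonically across the polygon and sweep out a disjoint cover of its interior.

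The proof then concludes by noting that after all $\ell(w)$ steps, the final right boundary path $P$ coincides with the path on the polygon determined by the $w$-permuted coset ordering, and the accumulated rhombi form a genuine tiling of the region between the two boundary paths. Matching this against the definition of an Elnitsky tiling gives the theorem; the correspondence with the reduced expression is built into the construction, since the rhombi are appended in the order $s_{i_1},s_{i_2},\ldots,s_{i_\ell}$.
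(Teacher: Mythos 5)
Your proposal has a genuine gap, and its central inductive step rests on a premise that is false outside of type A. You assert that right multiplication by $s_{i_\ell}$ ``transposes exactly one adjacent pair'' of cosets, so that each letter of the reduced word contributes a single rhombus attached at a convex corner. In the coset representation on $W/W_J$ the image of a simple reflection is in general a product of \emph{several} disjoint transpositions, $\varphi_\ll^J(s) = \prod_{w <_R ws}\bigl(L_\ll^J(w), L_\ll^J(ws)\bigr)$, and these need not be adjacent: in the paper's own $\mathrm{D}_5$ example one has $\varphi_\ll^J(s_1) = (4,6)(5,7)$, and in the $\mathrm{A}_2\times\mathrm{A}_3$ example $\varphi_\ll^J(s_3) = (7,10)(8,11)(9,12)$. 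Consequently each letter contributes a tile that is generally a $2k$-gon (Elnitsky's ``megatile'' in type D, for instance), not a rhombus, and the swapped positions are generally not neighbours on the boundary path. The deletion order does not, and cannot, repair this; it is simply one canonical choice of total refinement of the Bruhat order, and adjacency of the swapped pair is neither guaranteed by it nor needed.

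The idea that is actually required, and which is absent from your argument, is the Bruhat-order condition on the total ordering of the minimal coset representatives $W^J$. The paper's proof does not re-derive the tiling combinatorics at all: it constructs the permutation embedding $\varphi_\ll^J$ from a total order $\ll$ on $W^J$ refining the Bruhat order, proves (Theorem \ref{Theorem Bruhat Equivelence}, via Lemmas \ref{Lemma Bruhat in Symmetric group} and \ref{Lemma Bruhat t sandwich}) that this is a strong E-embedding, deduces via Proposition \ref{Proposition Strong E-embeddings are E-embeddings} that it is an E-embedding, and then invokes Theorem 1.2 of \cite{nicolaidesrowley1}, which converts any E-embedding into a tiling system for every reduced word. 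The condition that makes the tiles fit correctly is precisely $u <_R v \Rightarrow \varphi(u) <_B \varphi(v)$ in $\Sym{m}$, not adjacency of swapped entries. Your proof never establishes (or even states) any such order-compatibility for the coset labelling, so even granting your geometric setup there is no reason the appended tiles are non-degenerate or non-overlapping. You do correctly identify the role of the core-free hypothesis (faithfulness of the coset action, hence injectivity of $\varphi_\ll^J$), but the heart of the theorem is missing.
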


Theorem 1.2 of   \cite{nicolaidesrowley1}, shows how certain embeddings (that is, injective homomorphisms) of Coxeter groups into symmetric groups naturally give rise to bijections between classes of reduced words and certain tilings of polygons. These embeddings, called E-embeddings with the E standing for Elnitsky, were introduced in  \cite{nicolaidesrowley1} and are defined as follows.

\begin{definition}\label{def e-embedding} Suppose that $W$ is a Coxeter group and that $\varphi : W \hookrightarrow \Sym{n}$ is an embedding. Then $\varphi$ is an E-embedding for $W$ if for all $u,v \in W,$
$$ u <_R v \text{ implies } \varphi(u) <_B \varphi(v).$$
\end{definition}

Here, $<_R$ and $<_B$ denote the weak right order and the Bruhat order respectively, their full definitions are recalled in Section \ref{Section background}.  We use $\Sym{n}$ to denote the symmetric group of degree $n$ which we regard as a Coxeter group with its fundamental reflections being $\{(i , i+1) \; | \; i = 1, \dots, n-1 \}$. Further we shall write our permutations on the right of the elements in $\{1, \dots, n \}$ and so composition is evaluated from left to right. For a Coxeter group $W$, $S$ will denote its set of fundamental (or simple) reflections.

No means for constructing E-embeddings were provided in \cite{nicolaidesrowley1} and so the focus of this article is to provide a universal construction for all finite Coxeter groups. This is done by exploring a special subset of the E-embeddings which we call \textit{strong E-embeddings}, and will enable us to give explicit constructions of the tilings.

\begin{definition}[Strong E-embedding]\label{Definition Strong E-embedding} Suppose that $W$ is a Coxeter group. 
Let $\varphi: W \hookrightarrow \Sym{n}$ be an embedding for some positive integer $n$. Then $\varphi$ is a {strong E-embedding} if
for all $w \in W$ and $s \in S$, $$w <_R ws \text{\quad implies \quad} \varphi(w) <_B \varphi(w)t_i$$ for each $i = 1 \ldots k$, where $\varphi(s) = t_1\ldots t_k$ is such that each $t_i$ is a transposition in $\Sym{n}$ and $t_i, t_j$ are disjoint for all $1 \le i < j \le k$.
\end{definition}

As we will see in Theorems \ref{Theorem Bruhat Refining iff see} and \ref{Theorem Bruhat Equivelence}, in order to construct strong E-embedding,  we need a total order of the Coxeter group $W$ which is a refinement of the Bruhat order on $W$.  There are many possible choices for total orders which are refinements of the Bruhat order. A particularly easy type of example may be made by first ordering by element length and among those of a fixed length then dictating an arbitary total order. The last part of this recipe is, to say the least, rather unsatisfactory given the richness of Coxeter groups  particularly in connection with the Bruhat order. We prefer to employ the deletion order,  which is canonical, is a refinement of the Bruhat order and also has many interesting properties (see \cite{nicolaidesrowley3} for more on this order). One particularly noteworthy feature is that (except in small cases) the deletion order is not ranked by element length. Moreover the tilings obtained using the deletion order align with the Elnitsky tilings in  \cite{elnitsky} for the Coxeter groups of type A, B and D.  In Section \ref{Section Constructing Elnitsky's Tilings} where we present examples of tilings for groups of type $\mathrm{D}_5, \mathrm{A}_2 \times \mathrm{A}_3$ band $\mathrm{E}_8$,  we shall employ the deletion order.  These examples were produced with programs which are available from \cite{github} so as the reader can produce tilings for other elements of other Coxeter groups.

In Section \ref{Section background} we review some standard results we will need about Coxeter groups, focussing on the Bruhat order and minimal coset representatives for standard parabolic subgroups. Then Section \ref{Section Main Results} is devoted to proving all our main results, starting off by showing that strong E-embeddings are indeed E-embeddings. We then show that the definition can be reformulated as follows.

\begin{proposition}\label{Proposition Strong E-embeddings Weak Order Bruhat Equivalence} Suppose that $W$ is a Coxeter group with $T$ its set of reflections.
    Let $\varphi: W \hookrightarrow \Sym{n}$ be an embedding for some positive integer $n$. Then the following are equivalent:
    \begin{enumerate}[(i)]
        \item $\varphi$ is a strong E-embedding;
        \item for all $w \in W$ and for all $t \in T$, $$w <_B wt \text{\quad implies \quad} \varphi(w) <_B \varphi(w)t_i$$ for each $i = 1 \ldots k$, where $\varphi(t) = t_1\ldots t_k$ is such that each $t_i$ is a transposition in $\Sym{n}$ and $t_i, t_j$ are disjoint for all $1 \le i < j \le k$.
    \end{enumerate}
\end{proposition}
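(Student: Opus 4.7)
The implication (ii) $\Rightarrow$ (i) is immediate: every simple reflection is a reflection and the weak right order refines the Bruhat order, so $w <_R ws$ with $s \in S$ yields $w <_B ws$, and (ii) applied with $t = s$ gives (i). For (i) $\Rightarrow$ (ii), I plan to argue by induction on $\ell(t)$. The base case $\ell(t) = 1$ reduces to (i) itself, since for $s \in S$ the relation $w <_B ws$ coincides with $w <_R ws$. For the inductive step with $\ell(t) \geq 3$, I use the palindrome decomposition $t = srs$ with $s \in S$ and $r \in T$ of length $\ell(t) - 2$, obtained by peeling the outer letters from a reduced palindromic expression $t = s_1 s_2 \cdots s_k \cdots s_2 s_1$. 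Writing $\varphi(r) = r_1 \cdots r_p$ in disjoint transposition form, the key algebraic identity is
\[
\varphi(t) \;=\; \varphi(s)\, \varphi(r)\, \varphi(s) \;=\; \prod_{j=1}^{p} \bigl(\varphi(s)\, r_j\, \varphi(s)\bigr),
\]
and since conjugation preserves disjoint-cycle structure, the conjugates $\varphi(s) r_j \varphi(s)$ are themselves pairwise disjoint transpositions, so (after relabeling) each $t_i$ has the form $\varphi(s)\, r_{j(i)}\, \varphi(s)$.

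From $w <_B wt$, I would case-split on whether $s$ is a right descent of $w$ and of $wt$, and invoke the lifting property of the Bruhat order to extract an element $w' \in \{w, ws\}$ with $w' <_B w' r$. The inductive hypothesis then gives $\varphi(w') <_B \varphi(w') r_j$ for each $j$, while (i) applied to $s$ at $w$ (or at $ws$) yields $\varphi(w) <_B \varphi(w) v$, respectively $\varphi(ws) <_B \varphi(ws) v$, for every transposition $v$ in the decomposition of $\varphi(s)$. Combining these ingredients via the identity $\varphi(w)\, t_i = \varphi(ws)\, r_{j(i)}\, \varphi(s)$ is meant to deliver the desired conclusion.

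The main obstacle will be that right-multiplication by $\varphi(s)$ in $\Sym{n}$ does not preserve Bruhat relations in general, so the final combination step cannot proceed by pure transitivity. To handle this, I would exploit the key fact that in a symmetric group, $\sigma <_B \sigma \tau$ for a transposition $\tau$ is equivalent to the length inequality $\ell(\sigma \tau) > \ell(\sigma)$, and hence to the combinatorial condition that the pair swapped by $\tau$ appears in increasing order in the one-line notation of $\sigma$. Tracking how the pair swapped by $t_i$ relates to the pair swapped by $r_{j(i)}$ through the relabeling induced by conjugation with $\varphi(s)$, the required order condition on $\varphi(w)$ should follow from the corresponding condition for $r_{j(i)}$ on $\varphi(w')$ (from induction) together with the pair-order conditions on $\varphi(w)$ for each transposition of $\varphi(s)$ (from (i)), subject to careful bookkeeping in each of the descent cases.
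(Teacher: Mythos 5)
Your reduction of (ii)$\Rightarrow$(i) matches the paper's, but your route for (i)$\Rightarrow$(ii) is genuinely different: the paper avoids induction altogether by invoking the subword property to write $wt=usv^{-1}$ and $w=uv^{-1}$ with $t=vsv^{-1}$, $u<_R us$, and then transports the one-line-notation criterion of Lemma \ref{Lemma Bruhat in Symmetric group} through $\varphi(v^{-1})$ in a single chain of equivalences, landing on the defining condition of a strong E-embedding at $u$. Your induction on $\ell(t)$ with the palindromic peeling $t=srs$ can be made to work, and it has the merit of treating general relations $w<_B wt$ rather than just covers; the first half of your plan is fine once sharpened. Indeed the element you need is always $w'=ws$: one checks in all four descent cases (using only length parity and, in the case $s\notin D_R(w)$, $s\in D_R(wt)$, the lifting property to get $w\le_B wts$ and hence rule out $\ell(wts)\le\ell(w)$) that $w<_B wt$ forces $ws<_B (ws)r$. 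The branch $w'=w$ would in any case be useless, since $\varphi(w)<_B\varphi(w)r_j$ says nothing about $\varphi(w)t_i$ with $t_i=\varphi(s)r_j\varphi(s)$.

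The genuine gap is in your final combination step. Writing $t_i=\varphi(s)r_{j}\varphi(s)=\bigl((\alpha_j)\varphi(s),(\beta_j)\varphi(s)\bigr)$ and applying Lemma \ref{Lemma Bruhat in Symmetric group}, the condition $\varphi(w)<_B\varphi(w)t_i$ translates into $(\alpha_j)\varphi(ws)^{-1}<(\beta_j)\varphi(ws)^{-1}$ \emph{only if} $\varphi(s)$ preserves the relative order of $\alpha_j$ and $\beta_j$; if it reverses it, the translated condition is the exact negation, and your argument would prove $\varphi(w)t_i<_B\varphi(w)$ instead. The ingredients you propose to control this --- the strong E-embedding condition applied to $s$ at $w$ or at $ws$ --- concern the pairs swapped by the transpositions of $\varphi(s)$, not how $\varphi(s)$ acts on the pairs $\{\alpha_j,\beta_j\}$ swapped by the $r_j$, so they cannot close this gap (and applying (i) at $w$ is not even available when $s\in D_R(w)$). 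What is actually needed is $(\alpha_j)\varphi(s)<(\beta_j)\varphi(s)$, which by Lemma \ref{Lemma Bruhat in Symmetric group} is exactly $\varphi(s)<_B\varphi(s)r_j$; this you can obtain from your own inductive hypothesis applied to the pair $(s,r)$, since $\ell(sr)=\ell(r)+1>\ell(s)$ gives $s<_B sr$. With that extra application of the induction hypothesis, and with the descent case analysis written out, your proof closes; as written, it does not.
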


The remainder of Section \ref{Section Main Results} aims to construct strong E-embeddings from Cayley embeddings equipped with a total ordering. We will use $\ll$ to denote a total ordering on $W$ meaning for all distinct $u,v \in W$ either $u \ll v$ or $v \ll u$. We define $L_\ll(w) = |\{u \in W \,|\, u \ll w\}|$. Then the Cayley embedding induced from $\ll$ is given by $\varphi_\ll: W \hookrightarrow \Sym{|W|}$ where $$L_\ll(u)\varphi_\ll(w) = L_\ll(uw^{-1}) \; \mbox{for all}  \; u,v \in W.$$ 

This means that 
$$\varphi_\ll(s) = \prod_{w <_R ws} (L_\ll(w), L_\ll(ws))$$ for each $s \in S$.

We ask for which total orderings $\ll$, does $\varphi_\ll$ happen to be a strong E-embedding? The answer is given by our next theorem.

\begin{theorem}\label{Theorem Bruhat Refining iff see}
Let $\ll$ be a total ordering of $W$ such that $L_\ll(1)=0$. Then the following are equivalent:
\begin{enumerate}[(i)]
    \item $\ll$ is a refinement of the Bruhat order;
    \item $\varphi_\ll$ is a strong E-embedding.
\end{enumerate}
\end{theorem}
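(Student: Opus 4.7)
The plan is to unfold the strong E-embedding condition into a combinatorial statement on $\ll$ and then verify both implications using the standard criterion that, for a reflection $t$ in a finite Coxeter group and an element $x$, $x<_B xt$ iff $x\alpha_t$ is a positive root, where $\alpha_t$ denotes the positive root associated to $t$.

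\textbf{Step 1 (reformulation).} A direct inversion count shows that, for $\pi\in\Sym{n}$ and a transposition $\tau=(a,b)$ with $a<b$, $\pi<_B\pi\tau$ iff the value $a$ precedes the value $b$ in the one-line notation of $\pi$. Taking $\pi=\varphi_\ll(w)$ and the transposition $(L_\ll(v),L_\ll(vs))$ appearing in $\varphi_\ll(s)$, and using (i) to guarantee $L_\ll(v)<L_\ll(vs)$, this rewrites $\varphi_\ll(w)<_B\varphi_\ll(w)(L_\ll(v),L_\ll(vs))$ as $L_\ll(vw^{-1})<L_\ll(vsw^{-1})$. Hence (ii) is equivalent to
\[
(\star)\quad v<_R vs\ \text{and}\ w<_R ws\ \Longrightarrow\ vw^{-1}\ll vsw^{-1},\ \text{for all }v,w\in W,\ s\in S.
\]

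\textbf{Step 2 ((i)$\Rightarrow$(ii)).} Given (i), to establish $(\star)$ it suffices to prove $vw^{-1}<_B vsw^{-1}$. Write $vsw^{-1}=(vw^{-1})\cdot t$ with $t:=wsw^{-1}$. The hypothesis $w<_R ws$ says $w\alpha_s$ is positive, so $\alpha_t=w\alpha_s$, and the root criterion then reduces the desired inequality to positivity of
\[
(vw^{-1})\alpha_t=(vw^{-1})(w\alpha_s)=v\alpha_s,
\]
which follows at once from $v<_R vs$.

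\textbf{Step 3 ((ii)$\Rightarrow$(i)).} Given $a<_B b$, decompose into Bruhat covers $a<_B at=b$ (with $t\in T$, $\ell(at)=\ell(a)+1$) and treat one at a time. For such a cover, pick a minimum-length $w\in W$ together with $s\in S$ so that $t=wsw^{-1}$; minimality forces $w<_R ws$, since the identity $(ws)s(ws)^{-1}=wsw^{-1}=t$ shows that otherwise $ws$ would be a shorter conjugator. Setting $v:=aw$ makes $vw^{-1}=a$ and $vsw^{-1}=at=b$ automatic, and the remaining hypothesis $v<_R vs$, unfolded through the same root identity $(aw)\alpha_s=a(w\alpha_s)=a\alpha_t$, is precisely that $a\alpha_t$ is positive, i.e.\ $a<_B at$. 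Applying $(\star)$ yields $a\ll b$, and transitivity of $\ll$ completes the chain.

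\textbf{Main obstacle.} The entire argument pivots on the symmetric root identity $(vw^{-1})\alpha_t=v\alpha_s$, which collapses every Coxeter-theoretic hypothesis into a single positivity condition and makes both directions parallel. The only non-formal ingredient is the existence, for each reflection $t$, of a minimum-length conjugator $w$ with $t=wsw^{-1}$ and $w<_R ws$; this is a standard fact about finite Coxeter groups and can simply be cited.
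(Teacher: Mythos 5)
Your forward direction (Steps~1--2) is sound and is only a mild variant of the paper's argument: the paper also reduces $\varphi_\ll(w) <_B \varphi_\ll(w)(L_\ll(v),L_\ll(vs))$ to $vw^{-1} \ll vsw^{-1}$ via the inversion criterion of Lemma~\ref{Lemma Bruhat in Symmetric group}, but then obtains $vw^{-1} <_B vsw^{-1}$ by citing Lemma~\ref{Lemma Bruhat t sandwich} (if $u<_B ut$ and $v<_B tv$ then $uv<_B utv$), where you instead reprove exactly that special case by the root-positivity identity $(vw^{-1})\alpha_t = v\alpha_s$. Either is fine.

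The converse direction has a genuine gap. In Step~1 you establish the equivalence of (ii) with $(\star)$ only \emph{``using (i) to guarantee $L_\ll(v)<L_\ll(vs)$''}; in Step~3 you are trying to prove (i), so you may not assume it there. Without (i), the transposition $\{L_\ll(v),L_\ll(vs)\}$ may have either orientation, and Lemma~\ref{Lemma Bruhat in Symmetric group} then turns condition (ii) into the unoriented biconditional $v \ll vs \Leftrightarrow vw^{-1} \ll vsw^{-1}$ rather than into $(\star)$. Concretely, in Step~3 with $v=aw$ all that (ii) gives you is $a \ll b \Leftrightarrow aw \ll aws$, and nothing yet pins down the right-hand side. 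The telltale symptom is that your argument never invokes the hypothesis $L_\ll(1)=0$, which is genuinely indispensable: if $\ll$ refines $<_B$, the reversed order $\ll'$ induces $\varphi_{\ll'} = w_0\varphi_\ll(\cdot)w_0$ (conjugation by the longest element of $\Sym{|W|}$, which preserves the Bruhat order there), so $\varphi_{\ll'}$ is again a strong E-embedding while $\ll'$ anti-refines $<_B$; only $L_{\ll'}(1)=|W|-1\neq 0$ rules it out. The repair is short and is essentially what the paper does in its contradiction argument: apply the biconditional with $w:=v$, so that $vw^{-1}=1$ and $v \ll vs \Leftrightarrow 1 \ll vsv^{-1}$, and use $L_\ll(1)=0$ to force the right-hand side; this recovers $(\star)$ from (ii) alone, after which your Step~3 goes through. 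As written, though, Step~3 applies a statement that has only been derived under the hypothesis it is meant to prove.
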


For $\ll$ to be a refinement of the Bruhat order, it needs to satisfy the condition that $u <_B v$ implies $ u \ll v$ for all distinct $u,v \in W$.  Theorem \ref{Theorem Bruhat Equivelence} gives a more general result than Theorem \ref{Theorem Bruhat Refining iff see}. It relaxes the condition that $\varphi_\ll$ needs to be a Cayley embedding where instead, we can take any $J\subseteq S$ and consider permutations restricted to the minimal coset representatives, $W^J$, of $W_J = \,<J>$ (precise details are given in Section \ref{Section Main Results}).

\begin{theorem}\label{Theorem Bruhat Equivelence}
Let $J \subseteq S$ and $\ll^J$ be a total ordering of $W^J$ such that $L_{\ll^J}(1) = 0$. Then the following are equivalent:
    \begin{enumerate}[(i)]
        \item $\varphi_\ll^J$ is a strong E-embedding;
        \item $\ll^J$ is a total refinement of  $<_B$, restricted to $W^J$.
    \end{enumerate}
\end{theorem}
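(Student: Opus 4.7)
The plan is to mirror the proof of Theorem~\ref{Theorem Bruhat Refining iff see} (which is the case $J = \emptyset$), adapting each step to $W^J$. First I would pin down the cycle structure of $\varphi_\ll^J(s)$ for $s\in S$: when $s\in J$, right multiplication by $s$ fixes every coset of $W_J$, so $\varphi_\ll^J(s) = \mathrm{id}$; when $s \in S\setminus J$ and $u \in W^J$, one has $(us)^J \ne u$ (otherwise $s \in W_J$), so $u \mapsto (us)^J$ is a fixed-point-free involution on $W^J$, giving
$$\varphi_\ll^J(s) \;=\; \prod_{\{u,u'\}} (L_{\ll^J}(u),\, L_{\ll^J}(u')),$$
the product running over the 2-orbits of $s$ on $W^J$; moreover, within each such orbit the two elements are Bruhat-comparable in $W$. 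Combining the Bruhat criterion in $\Sym{|W^J|}$ (namely $\sigma<_B\sigma(a,b)$ with $a<b$ iff $a\sigma^{-1} < b\sigma^{-1}$) with the identity $L_{\ll^J}(u)\varphi_\ll^J(w^{-1}) = L_{\ll^J}((uw^{-1})^J)$ then translates the strong E-embedding condition into: whenever $w <_R ws$ in $W$ and $\{u,u'\}$ is a 2-orbit of $s$ on $W^J$ with $u \ll^J u'$, we have $(uw^{-1})^J \ll^J (u'w^{-1})^J$.

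For the direction (ii)$\Rightarrow$(i), assume $\ll^J$ refines $<_B$ on $W^J$. The comparability observation together with the refinement hypothesis allows one to take $u <_B u'$, and it then suffices to prove the key lemma: if $u, u' \in W^J$ form a 2-orbit of some $s \in S\setminus J$ with $u <_B u'$ and $w <_R ws$, then $(uw^{-1})^J <_B (u'w^{-1})^J$. My plan is to prove this by induction on $\ell(w)$. The ascent hypothesis $w <_R ws$, equivalently $\ell(sw^{-1}) > \ell(w^{-1})$, ensures that $s$ is not a left descent of $w^{-1}$, so for $w\neq 1$ another left descent $s'$ of $w^{-1}$ can be peeled off; one then applies the inductive hypothesis to a shorter element and invokes the lifting property of $<_B$ together with the deletion/subword characterisation of $(\cdot)^J$.

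For (i)$\Rightarrow$(ii), I would argue by contrapositive. If $\ll^J$ does not refine $<_B$ on $W^J$, choose $u, u' \in W^J$ with $u <_B u'$, $u' \ll^J u$, and $\ell(u')-\ell(u)$ as small as possible, so that $u'$ covers $u$ in the Bruhat order on $W^J$. Then $u' = ut$ for some reflection $t \in T$, and $\{u, u'\}$ is a 2-orbit of $t$ on $W^J$. Were $\varphi_\ll^J$ a strong E-embedding, Proposition~\ref{Proposition Strong E-embeddings Weak Order Bruhat Equivalence} applied with $w = u$ would force $\varphi_\ll^J(u) <_B \varphi_\ll^J(u) \tau$ for the transposition $\tau = (L_{\ll^J}(u'), L_{\ll^J}(u))$ of $\varphi_\ll^J(t)$ coming from this 2-orbit. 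The reformulation from the first paragraph then yields $(u'u^{-1})^J \ll^J (uu^{-1})^J = 1$, contradicting $L_{\ll^J}(1) = 0$.

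The main obstacle will be the key lemma in the (ii)$\Rightarrow$(i) direction. The map $v \mapsto (vw^{-1})^J$ on $W^J$ is not order-preserving in general, so the argument must crucially exploit that $u$ and $u'$ differ modulo $W_J$ by right multiplication by $s$, combined with the compatibility between right multiplication by $s$ and by $w^{-1}$ guaranteed by $w <_R ws$.
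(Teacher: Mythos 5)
Your direction (i)$\Rightarrow$(ii) is essentially the paper's argument: produce a Bruhat-violating pair realised by a single reflection, apply Proposition \ref{Proposition Strong E-embeddings Weak Order Bruhat Equivalence} at $w=u$, and contradict $L_{\ll^J}(1)=0$; your minimal-counterexample reduction to a covering pair is a detail the paper omits, and is fine. The problems are in the setup and in the other direction. First, the setup: in the intended construction it is not true that $\varphi_\ll^J(s)=\mathrm{id}$ for $s\in J$ --- if it were, $\varphi_\ll^J$ could not be an embedding for any $J\neq\emptyset$. Compare the paper's $\mathrm{D}_5$ example, where $J=\{s_1,s_2,s_3,s_4\}$ and yet $\varphi_\ll^J(s_1)=(4,6)(5,7)$: the $2$-cycles of $\varphi_\ll^J(s)$ are the pairs $\{u,us\}$ with both $u,us\in W^J$, and such pairs exist for every $s\in S$. (Your reading is an understandable consequence of the paper's imprecise definition of $\varphi_\ll^J$ via $(u^Jw^{-1})^J$, but note also that for $s\notin J$ the map $u\mapsto (us)^J$ is in general neither fixed-point-free nor an involution --- try $W=\mathrm{A}_2$, $J=\{s_1\}$, $s=s_2$, $u=s_1s_2$.) Restricting attention to $s\in S\setminus J$ therefore discards most of the generators that actually have to be checked.

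Second, and more seriously, your (ii)$\Rightarrow$(i) direction is only a plan. The ``key lemma'' you isolate --- that $u<_B u'$ and $w<_R ws$ force $(uw^{-1})^J <_B (u'w^{-1})^J$ --- is precisely the mathematical content of this implication, and you leave it to an unexecuted induction on $\ell(w)$ that you yourself flag as the main obstacle. The paper disposes of it in one line: for $J=\emptyset$, writing $u'=ut$ with $t\in T$, the hypotheses $u<_B ut$ and $w^{-1}<_B tw^{-1}$ give $uw^{-1}<_B utw^{-1}$ by the quoted ``sandwich'' property (Lemma \ref{Lemma Bruhat t sandwich}, i.e.\ Lemma 2.2.10 of \cite{cocg}); the general case is then obtained not by working inside $W^J$ at all, but by extending $\ll^J$ to a total refinement of $<_B$ on all of $W$ and restricting (Lemma \ref{Lemma Strong E-embedding  Restriction}). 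Your proposed induction, peeling left descents off $w^{-1}$, would in effect have to reprove Lemma \ref{Lemma Bruhat t sandwich} from scratch, and would additionally need Lemma \ref{Lemma Parabolic Projection preserves Bruhat} to pass from $uw^{-1}<_B u'w^{-1}$ to the projected statement; since you invoke neither, the argument as written has a genuine gap at its central step.
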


For further material on tilings the reader may consult  \cite{epty},  \cite{hamanaka}, \cite{nicolaidesrowley2} \cite{tenner1} and \cite{tenner2}.
 
\section{Background}\label{Section background}

A group, $W$, is a \emph{(finitely generated) Coxeter group} if it admits a presentation over some finite set $S = \{s_1,\ldots,s_n\}$, satisfying $$ W = \langle \, S \,\, | \,\, (s_is_j)^{m_{ij}} = 1 \,\rangle$$ Here, for each $i \ne j$, $m_{ij}$ is a positive integer such that $m_{ij} = m_{ji} > 1$ and $m_{ii} = 1$. We call the pair $(W, S)$ a \emph{Coxeter system} and the elements of $S$ the fundamental reflections of $W$. The set of \emph{reflections} of a Coxeter system is the set $T:= S^W = \{ ws_iw^{-1} \; |  \; \ w \in W, s_i \in S \}$.

Let $\Gamma(W)$ be the graph whose nodes are $S$ and whose set of edges is $\{\{s_i,s_j\} \, | \, m_{ij} \ne 2\}$. Sometimes $\Gamma(W)$ is called the Dynkin diagram of $W$. A Coxeter group is called \emph{irreducible} if $\Gamma(W)$ is connected. For the remainder of this paper, we assume the Coxeter groups we consider are finite. 

An example, as already mentioned,  of a Coxeter group is the symmetric group.  Take $S = \{s_1,\ldots,s_n\}$ and consider $$ W = \langle \, S \,\, | \,\, (s_is_j)^{m_{ij}} = 1 \,\rangle$$ such that $m_{i j} = 3$ if $|i-j|=1$ and $m_{ij} = 2$ otherwise for $i \ne j$. The map sending $s_i$ to the adjacent transposition $(i,i+1) \in \Sym{n+1}$, extends uniquely to an isomorphism from $W$ to $\Sym{n+1}$. 

For a general Coxeter group, $W$, and $w \in W$, a \emph{word} for $w$ over $S$ is an expressions of the form $w = s_{i_1}s_{i_2}\ldots s_{i_k}$ where $s_{i_j} \in S$ for each $j = 1,\ldots,k$. A word is called \emph{reduced} if it is of minimal length and we denote that length by $\ell(w)$.

Two important posets related to the reduced words of Coxeter groups are the \emph{weak (right) order}, denoted $<_R$, and the \emph{Bruhat order}, denoted $<_B$. The weak order is defined as the reflexive, transitive closure of the covering relations $w \triangleleft_R ws$ if and only if $\ell(ws) = \ell(w) + 1$ where $w \in W$ and some $s \in S$.
Similarly,  the Bruhat order is defined by the covering relations $w \triangleleft_B wt$ if and only if $\ell(wt) = \ell(w) + 1$, where $w \in W$ and some $t \in T$. 
The weak order can be characterized in terms of prefixes of reduced words (Proposition 3.1.2 of \cite{cocg}): for all $u,v \in W$, $u <_R v$ if and only if there is a reduced word for $u$ that is a prefix of some reduced word for $v$.
 Less obviously, the Bruhat order can also be characterized in terms of subwords of reduced words as stated in Lemma \ref{Lemma Bruhat Subwords}.

\begin{lemma}\label{Lemma Bruhat Subwords}
    For $u,v \in W$, the following are equivalent:
    \begin{enumerate}[$(i)$]
        \item $u <_B w$;
        \item for every reduced word of $w$, there exists a subword that is a reduced word for $u$; and
        \item some reduced word of $w$ has a subword that is a reduced word for $u$.
    \end{enumerate}
\end{lemma}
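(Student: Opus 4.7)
The plan is to prove the equivalence by the cycle (i) $\Rightarrow$ (ii) $\Rightarrow$ (iii) $\Rightarrow$ (i), with (ii) $\Rightarrow$ (iii) being immediate since ``every'' implies ``some''. Throughout, I will rely on the Strong Exchange Condition and the fact that the Bruhat order is symmetric under inversion, i.e.\ $u \leq_B v$ iff $u^{-1} \leq_B v^{-1}$, which lets me convert between left and right multiplication (the paper defines $<_B$ via right multiplication by reflections).

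For (i) $\Rightarrow$ (ii) I would induct on $\ell(w) - \ell(u)$. The base case $\ell(w) = \ell(u)$ forces $u = w$, so any reduced word for $w$ is itself the subword. For the inductive step, pick $w'$ with $u \leq_B w' \triangleleft_B w$, so $w = w't$ for some reflection $t$ with $\ell(w') = \ell(w) - 1$. Given any reduced expression $w = s_{i_1}s_{i_2}\cdots s_{i_k}$, the Strong Exchange Condition produces an index $j$ such that $w' = s_{i_1}\cdots\widehat{s_{i_j}}\cdots s_{i_k}$, and this deletion is itself reduced. The inductive hypothesis, applied to this reduced word for $w'$, extracts a reduced subword for $u$, which is automatically a subword of the original word for $w$.

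For (iii) $\Rightarrow$ (i), the heart of the argument, I would induct on $\ell(w)$. Given a reduced expression $w = s_{i_1}\cdots s_{i_k}$ containing a reduced subword $s_{i_{j_1}}\cdots s_{i_{j_r}} = u$, I split on whether $j_1 = 1$. If $j_1 > 1$, then $u$ is already a reduced subword of the reduced expression $s_{i_2}\cdots s_{i_k}$ for $s_{i_1}w$; induction gives $u \leq_B s_{i_1}w$. Using inversion symmetry, $s_{i_1}w \leq_B w$ follows from the fact that right-multiplying $w^{-1}$ by $s_{i_1}$ decreases length. If $j_1 = 1$, then $s_{i_1}u = s_{i_{j_2}}\cdots s_{i_{j_r}}$ is a reduced subword of the reduced expression $s_{i_2}\cdots s_{i_k}$ for $s_{i_1}w$, so by induction $s_{i_1}u \leq_B s_{i_1}w$; the standard Lifting Property (or a direct chain argument after inversion) then lifts this comparison to $u \leq_B w$. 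Strictness follows because $r < k$ forces at least one omitted letter and hence $u \neq w$.

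The main obstacle I anticipate is bookkeeping the left/right asymmetry: because the paper's Bruhat covering is phrased as $w \triangleleft_B wt$, the shortcut of ``multiplying on the left by $s_{i_1}$'' requires justification. I intend to defuse this once at the start by invoking inversion symmetry, after which the two case-splits above become clean. A secondary subtlety is that deleting a single letter from a reduced word need not, a priori, be reduced; but in the (i) $\Rightarrow$ (ii) direction this is guaranteed by Strong Exchange, and in the (iii) $\Rightarrow$ (i) direction the subword we carry through is reduced by hypothesis, so the issue does not arise.
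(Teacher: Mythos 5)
The paper does not actually prove this lemma: its ``proof'' is a one-line citation to Corollary 2.2.3 of Bj\"orner--Brenti, so any genuine argument you give is necessarily a different route. What you have written is, in substance, the standard textbook proof of that cited result (the Subword Property), and it is essentially correct: the induction on $\ell(w)-\ell(u)$ for (i)$\Rightarrow$(ii) works because the paper's definition of $<_B$ as the transitive closure of the covers $w\triangleleft_B wt$ with $\ell(wt)=\ell(w)+1$ supplies the saturated chain you need, and the Strong Exchange Condition guarantees that the one-letter deletion is again reduced (the deleted word has $k-1$ letters and represents an element of length $k-1$). Your handling of the left/right asymmetry is also fine, since inversion symmetry of $<_B$ follows directly from the definition ($(wt)^{-1}=w^{-1}(wtw^{-1})$ with $wtw^{-1}\in T$). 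The one point that deserves more care is the appeal to the Lifting Property in the case $j_1=1$ of (iii)$\Rightarrow$(i): in Bj\"orner--Brenti that property is \emph{derived from} the subword characterization, so quoting it there would be circular. You need the version provable directly from the Strong Exchange Condition (Deodhar's Property Z, or Humphreys, Proposition 5.9): if $su<u$, $sw<w$ and $su\le_B sw$, then $u\le_B w$. Your parenthetical ``or a direct chain argument after inversion'' gestures at this, but in a written-up version you should either prove that lemma or cite a source where it precedes the subword property. A last cosmetic remark: the statement as printed conflates strict and non-strict order (a reduced word is a subword of itself), so (iii)$\Rightarrow$(i) should really conclude $u\le_B w$; your observation about $r<k$ only applies when the subword is proper.
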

\begin{proof}
    This is Corollary 2.2.3 of \cite{cocg}.
\end{proof}

For the symmetric group, the Bruhat order has the following description in terms of its usual group action.

\begin{lemma}\label{Lemma Bruhat in Symmetric group}
    Let $w, t\in \Sym{n}$ for some positive integer $n$ such that $t$ is a transposition $t = (a, b)$ with $1 \le a < b \le n$. Then $$ w <_B wt $$ if and only if $(a)w^{-1} < (b)w^{-1}$.
\end{lemma}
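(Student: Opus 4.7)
My plan is to reduce the statement to a length computation using the fact that in $\Sym{n}$ length equals the number of inversions, i.e.\ $\ell(w) = |\mathrm{inv}(w)|$ where $\mathrm{inv}(w) = \{(i,j) : 1 \le i < j \le n, (i)w > (j)w\}$. Since $t$ is a reflection, $w$ and $wt$ are comparable in the Bruhat order: exactly one of $w <_B wt$ or $wt <_B w$ holds, and which one is determined entirely by whether $\ell(wt) > \ell(w)$ or $\ell(wt) < \ell(w)$. So it suffices to show that $\ell(wt) > \ell(w)$ if and only if $(a)w^{-1} < (b)w^{-1}$.

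The first step is to describe how right multiplication by $t = (a,b)$ modifies the one-line image sequence of $w$. Setting $p = (a)w^{-1}$ and $q = (b)w^{-1}$, one checks directly (since the paper composes permutations left-to-right) that $(i)wt = (i)w$ whenever $(i)w \notin \{a,b\}$, while $(p)wt = b$ and $(q)wt = a$. Hence $wt$ is obtained from $w$ by interchanging the values $a$ and $b$ situated at positions $p$ and $q$ in the one-line notation.

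The heart of the proof is then a case analysis comparing $\mathrm{inv}(w)$ with $\mathrm{inv}(wt)$. Assume first that $p < q$, so in $w$ the pair of positions $(p,q)$ holds values $(a,b)$ with $a < b$, contributing no inversion, while in $wt$ the same positions hold $(b,a)$, contributing an inversion. For positions $k < p$ or $k > q$, the pair involving $k$ and a swapped position is either an inversion in both $w$ and $wt$ or in neither, because swapping the relative order of $a$ and $b$ does not change their order relative to a value outside the interval $[a,b]$, and a direct check handles the cases $(k)w < a$, $(k)w > b$, and $a < (k)w < b$. For $p < k < q$, I would verify that the pairs $(p,k)$ and $(k,q)$ either both remain unchanged (when $(k)w < a$ or $(k)w > b$) or both flip from non-inversion to inversion (when $a < (k)w < b$). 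The net effect is
\[
\ell(wt) - \ell(w) = 1 + 2\,|\{k : p < k < q,\ a < (k)w < b\}| > 0,
\]
so $w <_B wt$. If instead $p > q$, the same bookkeeping carried out symmetrically (or applied to $wt$ in place of $w$, using $w = (wt)t$) yields $\ell(wt) < \ell(w)$, so $w \not<_B wt$.

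The computation is essentially routine; the only thing that needs care is the convention for the right action and making sure the sign of the length change is pinned down by the single pair $(p,q)$ rather than being swamped by contributions from intermediate positions, which is why the tidy identity above, with the intermediate contribution appearing as an even non-negative summand, is the key bookkeeping step.
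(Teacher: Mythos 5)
Your argument is correct, but it takes a genuinely different route from the paper, which simply cites Lemma 2.1.4 of Björner--Brenti and notes the adjustment forced by the left-to-right composition convention (whence $w(a)<w(b)$ becomes $(a)w^{-1}<(b)w^{-1}$). You instead give a direct inversion count: your description of $wt$ as the one-line word of $w$ with the values $a$ and $b$ (sitting at positions $p=(a)w^{-1}$ and $q=(b)w^{-1}$) interchanged is right under the paper's right-action convention, your case analysis is complete, and the identity $\ell(wt)-\ell(w)=1+2\,|\{k: p<k<q,\ a<(k)w<b\}|$ when $p<q$ is correct and pins down the sign cleanly; the $p>q$ case does follow by applying the same computation to $wt$. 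What your proof buys is an explicit, self-contained verification at the level of permutation combinatorics; what it quietly imports is the one genuinely Coxeter-theoretic ingredient, namely that for a reflection $t$ the elements $w$ and $wt$ are always Bruhat-comparable with the direction dictated by length. That fact is not immediate from the paper's definition of $<_B$ via covering relations (it rests on the strong exchange or subword property) and is essentially of the same depth as the lemma being cited, so if you want the proof to stand fully on its own you should either cite that comparability statement explicitly or derive $w<_Bwt$ from the covering relations by exhibiting a chain, which your length formula does not by itself provide. The inversion bookkeeping itself, including the standard fact $\ell(w)=|\mathrm{inv}(w)|$ in $\Sym{n}$, is routine and correctly executed.
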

\begin{proof}
        This is a direct consequence of Lemma 2.1.4 of \cite{cocg} and taking into account that we multiply permutations from left to right.
\end{proof}

 \begin{lemma}\label{Lemma Bruhat t sandwich}
Let $u,v \in W$ and $t \in T$ be such that $u <_B ut$ and $v <_B tv$, then $uv <_B utv$.
\end{lemma}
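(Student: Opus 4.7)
The plan is to reduce the statement to a length inequality and then verify it via the reflection representation of $W$. Observe that $utv = uv \cdot (v^{-1} t v)$ and $v^{-1}tv \in T$, so $uv$ and $utv$ differ by right-multiplication by a reflection. For any $w \in W$ and $t' \in T$, $w$ and $wt'$ are always Bruhat-comparable, with the direction dictated by length (indeed $w <_B wt'$ iff $\ell(wt') > \ell(w)$). So it is enough to show $\ell(utv) > \ell(uv)$.

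I would then invoke the standard root-system machinery as developed in \cite{cocg}. Let $\Phi$ be the root system of $(W,S)$ in its reflection representation, with positive roots $\Phi^+$ and bijection $T \leftrightarrow \Phi^+$ given by $t \leftrightarrow \alpha_t$. The well-known characterisation
\[
w <_B wt \iff w(\alpha_t) \in \Phi^+, \qquad w <_B tw \iff w^{-1}(\alpha_t) \in \Phi^+
\]
translates the two hypotheses into $u(\alpha_t) \in \Phi^+$ and $v^{-1}(\alpha_t) \in \Phi^+$.

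Setting $r = v^{-1} t v \in T$, the positive root $\alpha_r$ attached to $r$ equals $v^{-1}(\alpha_t)$, and this is positive by the second hypothesis (so no sign flip is needed). The desired conclusion $uv <_B uv \cdot r = utv$ is then equivalent to $(uv)(\alpha_r) \in \Phi^+$, and
\[
(uv)(\alpha_r) = uv\bigl(v^{-1}(\alpha_t)\bigr) = u(\alpha_t) \in \Phi^+,
\]
which holds by the first hypothesis.

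The main obstacle is that the paper's background develops the Bruhat order only through the subword criterion (Lemma \ref{Lemma Bruhat Subwords}) and the symmetric-group description (Lemma \ref{Lemma Bruhat in Symmetric group}); the root-theoretic characterisation above is not recalled and would need to be cited as a standard fact from \cite{cocg}. A purely combinatorial alternative would fix reduced expressions $ut = a_1\cdots a_p$ and $tv = b_1\cdots b_q$ in which $u$ and $v$ respectively appear as subwords, then attempt to exhibit some reduced expression of $uv$ as a subword of some reduced expression of $utv$; tracking how the ``extra letters'' accounting for $t$ combine and cancel across the two expressions makes this route markedly more delicate, so I would favour the root-theoretic argument.
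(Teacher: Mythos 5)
Your proof is correct, but it is genuinely different from the paper's, for the simple reason that the paper does not prove this lemma at all: its entire ``proof'' is a citation to Lemma 2.2.10 of \cite{cocg}. Your argument is a clean, self-contained derivation via the reflection representation: rewriting $utv = uv\,(v^{-1}tv)$, noting $\alpha_{v^{-1}tv} = v^{-1}(\alpha_t)$ is positive by the second hypothesis, and computing $(uv)(\alpha_{v^{-1}tv}) = u(\alpha_t) \in \Phi^+$ is exactly right, and is essentially the standard root-theoretic proof of this fact. Two ingredients you use do need explicit citations, as you yourself anticipate: first, that $w$ and $wt'$ are always Bruhat-comparable with the direction given by length (the paper defines $<_B$ only via covering relations with $\ell(wt) = \ell(w)+1$, so the equivalence with the ``$\ell(wt') > \ell(w)$ implies $w <_B wt'$'' formulation is itself a standard but nontrivial fact, e.g.\ \cite{cocg} Section 2.1); and second, the criterion $\ell(wt) > \ell(w) \iff w(\alpha_t) \in \Phi^+$ for an arbitrary reflection $t$, which follows from the strong exchange condition. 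What your route buys is an actual argument rather than an appeal to authority; what the paper's route buys is brevity and consistency with its deliberately root-free, combinatorial presentation of the Bruhat order (subwords and the symmetric-group description only). Either is acceptable; yours would just require importing the root-system background into Section 2.
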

\begin{proof}
    This is Lemma 2.2.10 of \cite{cocg}.
\end{proof}

Let $J \subseteq S$, then we define $W_J = \langle J \rangle$. These subgroups are known as the  \emph{standard parabolic subgroups}. Each left (or right) coset of $W_J$ has a unique element of minimal length. The set of these minimal left coset representatives is denoted by $W^J$.
This leads to a unique decomposition property.

\begin{lemma}\label{Lemma parabolic unique decomposition}
    For all $J \subseteq S$ and $w \in W$, there exists unique $w^J \in W^J$, $w_J \in W_J$ such that 
    $w = w^Jw_J$. Moreover, $\ell(w) = \ell(w^J)+\ell(w_J)$.
\end{lemma}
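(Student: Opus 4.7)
The plan is to leverage the sentence immediately preceding the statement, namely that every left coset $wW_J$ contains a unique minimal-length element, together with the standard reformulation $W^J = \{u \in W : \ell(us) > \ell(u) \text{ for every } s \in J\}$, which follows from the exchange condition.

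For existence and uniqueness of the decomposition, I would let $w^J$ denote the unique minimal-length element of the coset $wW_J$ and set $w_J := (w^J)^{-1}w$; then $w_J \in W_J$ and $w = w^J w_J$ is the required factorisation. If also $w = u'v'$ with $u' \in W^J$ and $v' \in W_J$, then $u'W_J = wW_J = w^J W_J$, and since both $u'$ and $w^J$ are the minimal-length representative of this left coset they must coincide, and cancellation yields $v' = w_J$.

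The length formula $\ell(w) = \ell(w^J) + \ell(w_J)$ reduces to the auxiliary claim that $\ell(uv) = \ell(u) + \ell(v)$ whenever $u \in W^J$ and $v \in W_J$. The natural approach is induction on $\ell(v)$: the base case $v = 1$ is trivial, and for the step one picks $s \in J$ with $\ell(vs) = \ell(v) - 1$ so that by induction $\ell(uvs) = \ell(u) + \ell(v) - 1$. One then needs to rule out $\ell(uv) = \ell(uvs) - 1$. If that equality held, the strong exchange condition applied to the concatenation of reduced expressions for $u$ and $vs$ would allow a single letter to be deleted to produce a reduced expression for $uv$; the deleted letter cannot lie in the $u$-prefix because $u \in W^J$ gives $\ell(us') > \ell(u)$ for every $s' \in J$, nor can it lie in the $vs$-part without contradicting $\ell(v) = \ell(vs) + 1$. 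Hence $\ell(uv) = \ell(uvs) + 1 = \ell(u) + \ell(v)$, and applying this to $u = w^J$, $v = w_J$ finishes the proof.

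The main obstacle is executing the exchange argument in the inductive step cleanly, which is genuinely the only substantive piece and is essentially the content of the Lifting Property for parabolic quotients; everything else is formal consequence of the uniqueness of minimal coset representatives already recorded. In any case the lemma is Proposition 2.4.4 of \cite{cocg}, which the authors can simply cite.
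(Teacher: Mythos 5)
The paper's entire proof of this lemma is the single line ``This is Proposition 2.2.4 of \cite{cocg}'', so your closing remark --- that one can simply cite Björner--Brenti --- already coincides with what the authors actually do. (Your pointer to Proposition 2.4.4 is, if anything, the more standard location for the parabolic factorisation in that book; the paper's ``2.2.4'' may be a slip.) Your self-contained argument is the classical textbook proof and is essentially sound: existence and uniqueness follow formally from the unique minimal coset representative, and the length formula by induction on $\ell(w_J)$ with an exchange-condition argument. One small imprecision worth fixing: in the inductive step, if the deleted letter lies in the $u$-prefix, what you obtain is $uv = \hat{u}\,(vs)$, i.e.\ $ut = \hat{u}$ for the reflection $t = (vs)v^{-1}\cdots$ --- a conjugate of $s$ lying in $W_J$, not a simple reflection $s' \in J$. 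So the contradiction is not literally with $\ell(us') > \ell(u)$ for $s' \in J$, but with the minimality of $u$ in the coset $uW_J$ (since $ut \in uW_J$ and $\ell(ut) \le \ell(u) - 1$). This is a one-line repair and does not affect the validity of the approach; everything else is in order.
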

\begin{proof}
    This is Proposition 2.2.4 of \cite{cocg}.
\end{proof}

If $w \in W$, we will use $w^J$ and $w_J$ to denote its unique factors as given in Lemma \ref{Lemma parabolic unique decomposition} from $W^J$ and $W_J$, respectively.

\begin{lemma}\label{Lemma Parabolic Projection preserves Bruhat}
Let $J \subseteq S$ and $u,v \in W$. If $u <_B v$, then $u^J <_B v^J$.
\end{lemma}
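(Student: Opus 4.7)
My plan is to use the subword characterization of the Bruhat order (Lemma \ref{Lemma Bruhat Subwords}) together with the length-additive factorization from Lemma \ref{Lemma parabolic unique decomposition}. The idea is to pick a reduced expression for $v$ that respects the $W^J \cdot W_J$ split, extract a reduced subword for $u$, and then split that subword at the same boundary.

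First, by Lemma \ref{Lemma parabolic unique decomposition}, $v = v^J v_J$ with $\ell(v) = \ell(v^J) + \ell(v_J)$, so concatenating reduced words for $v^J$ and $v_J$ produces a reduced word for $v$ of the form $(s_{i_1}\cdots s_{i_p})(s_{j_1}\cdots s_{j_q})$ with every $s_{j_k} \in J$. Since $u <_B v$, Lemma \ref{Lemma Bruhat Subwords} provides a reduced subword of this expression equal to $u$. Split this subword at the boundary between the two blocks: write it as a product $xy$ where $x$ comes from the $v^J$-block and $y$ from the $v_J$-block. Both are reduced (as subwords of a reduced word), and crucially $y \in W_J$ because all of its letters lie in $J$.

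Now I would identify $u^J$ in terms of $x$. Since $y \in W_J$, the cosets satisfy $uW_J = xyW_J = xW_J$, so $u$ and $x$ lie in the same left $W_J$-coset and therefore have the same minimal representative: $u^J = x^J$. On the other hand, $x$ is a reduced subword of a reduced expression for $v^J$, so another application of Lemma \ref{Lemma Bruhat Subwords} gives $x \le_B v^J$. Finally, for any $w \in W$, Lemma \ref{Lemma parabolic unique decomposition} shows that $w^J$ is a prefix of some reduced word for $w$, giving $w^J \le_R w$ and hence $w^J \le_B w$; applied to $w = x$ this yields $x^J \le_B x$. Chaining these,
$$u^J \;=\; x^J \;\le_B\; x \;\le_B\; v^J,$$
which is the desired conclusion (with equality possibly occurring, as one would expect — for instance whenever $u$ and $v$ lie in the same coset of $W_J$).

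The only subtle point is the clean split of the reduced subword at the $v^J \mid v_J$ boundary; this works painlessly because a subword is just a choice of positions in a fixed reduced word, so letters selected before the boundary automatically land in the $v^J$-block and letters after in the $v_J$-block, with the latter's subword living in $\langle J \rangle = W_J$. Everything else is routine chasing of the Bruhat order through Lemmas \ref{Lemma Bruhat Subwords} and \ref{Lemma parabolic unique decomposition}.
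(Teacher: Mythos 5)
Your argument is correct, and it is genuinely different in character from what the paper does: the paper simply cites Proposition~2.5.1 of Björner--Brenti and gives no argument, whereas you supply a complete, self-contained proof from the subword property. Your route --- take the length-additive reduced word $(s_{i_1}\cdots s_{i_p})(s_{j_1}\cdots s_{j_q})$ for $v=v^Jv_J$, extract a reduced subword for $u$, split it at the block boundary as $u=xy$ with $y\in W_J$, and chain $u^J=x^J\le_B x\le_B v^J$ --- is sound at every step: $x$ and $y$ are reduced because they are contiguous factors of a reduced word for $u$; $uW_J=xW_J$ gives $u^J=x^J$; $x\le_B v^J$ comes from Lemma \ref{Lemma Bruhat Subwords}(iii)$\Rightarrow$(i); and $x^J\le_B x$ follows from $w^J\le_R w$ via Lemma \ref{Lemma parabolic unique decomposition}. (The standard textbook proof instead runs by induction on $\ell(v)$ using the lifting property; your subword argument is arguably more direct given the lemmas already quoted in this paper.) One point worth making explicit: what you actually prove is $u^J\le_B v^J$, and the strict form in which the lemma is stated is false as written --- take $u=1$ and $v=s\in J$, so $u<_B v$ but $u^J=v^J=1$. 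You noticed this ("with equality possibly occurring"), and indeed the cited source states the result with $\le$; so your version is the correct one, and the discrepancy lies in the paper's statement rather than in your proof.
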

\begin{proof}
    This is Lemma 2.5.1 of \cite{cocg}.
\end{proof}

We note that if $I\subseteq J \subseteq S$, then $W^J \subseteq W^I$. To see this, note that $W_I \le W_J$ and so each coset of $W_J$ is partitioned by those of $W_I$ by Lagrange's theorem. Since each coset of a standard parabolic subgroup has a unique minimal representative, any such member of a coset of $W_J$ must also be the minimal element of the coset of $W_I$ containing it. 

For each $J \subseteq S$, we may equip $W^J$ with a total ordering. We will denote such an ordering by $\ll^J$ and define $L_\ll^J(w^J) = |\{u^J \in W^J \, | \, u^J \ll^J w^J\}|$. Analogous to $\varphi_\ll$, we may define $\varphi_\ll^J : W \hookrightarrow \Sym{|W^J|}$ by $$(L_\ll^J(u^J))\varphi_\ll^J(w) = L_\ll^J((u^Jw^{-1})^J)$$ for all $u^J \in W^J$ and $w \in W$. For all $w \in W^J$ such that $ws \in W^J$ also, we have $$\varphi_\ll^J(s) = \prod_{w <_R ws} (L_\ll^J(w), L_\ll^J(ws)).$$

These maps will be embeddings provided $W_J$ is a core-free subgroup of $W$.

When $J = \emptyset$,  the definitions of $\ll^\emptyset, L_\ll^\emptyset$ and $\varphi_\ll^\emptyset$ coincide with those of $\ll, L_\ll$ and $\varphi$ respectively. For convenience then, we omit the appearance of $J = \emptyset$ in their notation.

%TODO: introduce Elnitsky.

\section{Strong E-embeddings}\label{Section Main Results}

Our first aim of this section is to prove that strong E-embeddings are indeed E-embeddings. This is done in Proposition \ref{Proposition Strong E-embeddings are E-embeddings} with Lemma \ref{Lemma aux mutually commuting transpositions} acting as an auxiliary lemma.
\begin{lemma}\label{Lemma aux mutually commuting transpositions}
Let $t_1,\ldots,t_k \in \Sym{n}$ be mutually disjoint transpositions for some positive integer $n$. For all $w \in \Sym{n}$, if $w <_B wt_i$ for each $i = 1,\ldots,k$, then $w <_B w t_1,\ldots t_k$.
\end{lemma}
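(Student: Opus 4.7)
The plan is to induct on $k$. The case $k=1$ is exactly the hypothesis. For the inductive step, suppose the result holds for any $k$ mutually disjoint transpositions, and consider $k+1$ such transpositions $t_1,\ldots,t_{k+1}$. By the inductive hypothesis applied to $t_1,\ldots,t_k$, we have $w <_B w t_1 \cdots t_k$, so by transitivity of $<_B$ it suffices to establish
$$w t_1 \cdots t_k <_B w t_1 \cdots t_{k+1}.$$

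Write $t_{k+1} = (a,b)$ with $1 \le a < b \le n$, and apply Lemma \ref{Lemma Bruhat in Symmetric group}: the displayed inequality is equivalent to $(a)(w t_1 \cdots t_k)^{-1} < (b)(w t_1 \cdots t_k)^{-1}$. Because $t_{k+1}$ is disjoint from each $t_i$ with $i \le k$, the points $a$ and $b$ lie outside the supports of $t_1,\ldots,t_k$, so they are fixed by $t_k \cdots t_1$. Hence $(a)(w t_1 \cdots t_k)^{-1} = (a) w^{-1}$ and $(b)(w t_1 \cdots t_k)^{-1} = (b) w^{-1}$, and the required strict inequality reduces to $(a)w^{-1} < (b)w^{-1}$, which is precisely the content of the hypothesis $w <_B w t_{k+1}$ (again by Lemma \ref{Lemma Bruhat in Symmetric group}).

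The key observation making the argument go through is that disjointness lets us replace the inverse of the product $w t_1 \cdots t_k$ by $w^{-1}$ when evaluating at the two points $a, b$ moved by $t_{k+1}$, so that the multi-transposition comparison after the product collapses to the single-transposition comparison in the hypothesis. Beyond this bookkeeping there is no real obstacle; in particular neither Lemma \ref{Lemma Bruhat t sandwich} nor the subword characterization of $<_B$ is needed, since everything follows directly from the combinatorial description of the Bruhat order on $\Sym{n}$ afforded by Lemma \ref{Lemma Bruhat in Symmetric group}.
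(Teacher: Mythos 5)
Your proof is correct and follows essentially the same route as the paper: the paper also builds the chain $w <_B wt_1 <_B wt_1t_2 <_B \cdots <_B wt_1\cdots t_k$ by iterating Lemma \ref{Lemma Bruhat in Symmetric group}, using disjointness so that each comparison at the points moved by $t_{i+1}$ collapses to the hypothesis $w <_B wt_{i+1}$. Your write-up simply makes explicit the bookkeeping that the paper leaves as ``iteratively apply''.
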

\begin{proof}
    Write $t_i = (a_i, b_i)$ for each $i = 1,\ldots,k$. Since $t_1,\ldots,t_k$ are disjoint, $a_i$ and $b_i$ are moved by only $t_i$. Equivalently, $\{a_i,b_i\} \cap \{a_j,b_j\} = \emptyset$ for all distinct pairs $i,j = 1,\ldots,k$. We may now iteratively apply Lemma \ref{Lemma Bruhat in Symmetric group} to achieve the chain $w <_B wt_1 <_B wt_1t_2 <_B \ldots <_B w t_1,\ldots t_k$.
\end{proof}

\begin{proposition}\label{Proposition Strong E-embeddings are E-embeddings} Suppose $W$ is a Coxeter group and  let $\varphi: W \hookrightarrow \Sym{n}$ be an embedding for some positive integer $n$. If $\varphi$ is a strong E-embedding, then $\varphi$ is an E-embedding.
\end{proposition}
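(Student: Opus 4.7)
The plan is to leverage the covering-relation definition of weak order. If $u <_R v$, then there is a reduced word $v = u s_{i_1} s_{i_2} \cdots s_{i_m}$ such that each partial product $w_j := u s_{i_1} \cdots s_{i_j}$ satisfies $w_{j-1} <_R w_{j-1} s_{i_j} = w_j$. Since $<_B$ is transitive, it suffices to prove the single-step implication: for all $w \in W$ and all $s \in S$, $w <_R ws$ implies $\varphi(w) <_B \varphi(ws)$.

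For that single step, I would unpack the strong E-embedding hypothesis. Write $\varphi(s) = t_1 t_2 \cdots t_k$ as the product of disjoint transpositions given by Definition \ref{Definition Strong E-embedding}. Because $\varphi$ is a strong E-embedding, the assumption $w <_R ws$ gives $\varphi(w) <_B \varphi(w) t_i$ for every $i = 1, \ldots, k$ individually. This is precisely the hypothesis of Lemma \ref{Lemma aux mutually commuting transpositions}, so applying that lemma with the group element $\varphi(w)$ and the disjoint transpositions $t_1, \ldots, t_k$ yields $\varphi(w) <_B \varphi(w) t_1 t_2 \cdots t_k$. Since $\varphi$ is a homomorphism, $\varphi(w) t_1 \cdots t_k = \varphi(w)\varphi(s) = \varphi(ws)$, so $\varphi(w) <_B \varphi(ws)$ as required.

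Combining these, I would chain the single-step inequalities along the reduced expression for $v$ relative to $u$, obtaining $\varphi(u) <_B \varphi(w_1) <_B \varphi(w_2) <_B \cdots <_B \varphi(w_m) = \varphi(v)$, and conclude by transitivity of the Bruhat order. No real obstacle presents itself here: the nontrivial content has already been isolated into Lemma \ref{Lemma aux mutually commuting transpositions}, which promotes the pointwise single-transposition Bruhat comparisons into a comparison with the full product $\varphi(s)$. The only thing to be a little careful about is making sure one really can iterate, i.e.\ that at each covering step of the weak-order chain we are entitled to reapply the strong E-embedding property; this is immediate because that property is quantified over all $w \in W$ and all $s \in S$.
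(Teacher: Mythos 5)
Your proof is correct and takes essentially the same approach as the paper: reduce to the single covering step $w <_R ws$, invoke the strong E-embedding property to get $\varphi(w) <_B \varphi(w)t_i$ for each disjoint transposition $t_i$ in $\varphi(s)$, and apply Lemma \ref{Lemma aux mutually commuting transpositions} to conclude $\varphi(w) <_B \varphi(ws)$. The only difference is that you spell out the chaining along a reduced word and the appeal to transitivity of $<_B$, which the paper leaves implicit.
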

\begin{proof}
    Suppose $\varphi$ is a strong E-embedding and consider $w \in W$ and $s \in S$ such that $w <_R ws$. If $\varphi(s) = t_1 \ldots t_k$ as the product of disjoint transpositions, then, since $\varphi$ is a strong E-embedding, $ \varphi(w) <_B \varphi(w)t_i$ for each $i=1,\ldots,k$. By Lemma \ref{Lemma aux mutually commuting transpositions}, $\varphi(w) <_B \varphi(w)t_1\ldots t_k = \varphi(ws)$.
\end{proof}

Now we prove Proposition \ref{Proposition Strong E-embeddings Weak Order Bruhat Equivalence} where we show that strong E-embeddings can be reformulated in terms of the Bruhat order only.

\begin{proof}[Proof of Proposition \ref{Proposition Strong E-embeddings Weak Order Bruhat Equivalence}]
%$\\ \, \\$
    Since $<_B$ is a refinement of $<_R$, $(ii)$ necessarily implies $(i)$.

    Now suppose that $\varphi$ is a strong E-embedding and take $w \in W$, $t \in T$ such that $w \triangleleft_B wt$. Since $w \triangleleft_B wt$, Lemma \ref{Lemma Bruhat Subwords} informs us that there exists $u,v \in W$ and $s \in S$ such that $wt = usv^{-1}$, $w = uv^{-1}$. Some consequences of this are:
    \begin{itemize}
        \item $\ell(wt) = \ell(u)+\ell(s)+\ell(v^{-1})$;
        \item $u <_R us$ and $v <_R vs$; and 
        \item $t = s^v = vsv^{-1}$.
    \end{itemize}
    
We may write $\varphi(t)$ and $\varphi(s)$ as products of disjoint  transpositions thus $\varphi(t) = t_1 \dots t_k$ and $\varphi(s) = s_1 \dots s_k$, where $t_i = (a_i,b_i)$ and $s_i =(\alpha_i,\beta_i)$ for $i = 1, \dots, k$. Since $t = s^v = vsv^{-1}$, conjugation in $\Sym{n}$ ensures that both $\varphi(t)$ and $\varphi(s)$ are products of the same number of transpositions, $k$, and we may index in such a way that $(\alpha_i)\varphi(v^{-1}) = a_i$ and $(\beta_i)\varphi(v^{-1}) = b_i$.

    It remains to show that $\varphi(w) <_B \varphi(w)t_i$ for each $i=1,\ldots,k$. By Lemma \ref{Lemma Bruhat in Symmetric group},  $\varphi(w) <_B \varphi(w)t_i$ if and only if $(a_i)\varphi(w)^{-1} < (b_i)\varphi(w)^{-1}$. 
    Now the following are equivalent.
    \begin{align*}
    \varphi(w) &<_B \varphi(w)t_i\\
        (a_i)\varphi(w)^{-1} &< (b_i)\varphi(w)^{-1} \\
        (\alpha_i)\varphi(v^{-1})\varphi(w)^{-1} &< (\beta_i)\varphi(v^{-1})\varphi(w)^{-1} \\
        (\alpha_i)\varphi(v^{-1})\varphi(uv^{-1})^{-1} &< (\beta_i)\varphi(v^{-1})\varphi(uv^{-1})^{-1} \\
       (\alpha_i)\varphi(v^{-1}vu^{-1}) &< (\beta_i)\varphi(v^{-1}vu^{-1}) \\
        (\alpha_i)\varphi(u)^{-1} &< (\beta_i)\varphi(u)^{-1} \\
        \varphi(u) &<_B \varphi(u)s_i.\\
    \end{align*}
    Since $u <_R us$, $\varphi(u) <_B \varphi(u)s_i$ follows from the fact that $\varphi$ is a strong E-embedding, which completes the proof.
\end{proof}

Now we focus on proving Theorem \ref{Theorem Bruhat Equivelence} using the auxiliary lemma, Lemma \ref{Lemma Strong E-embedding  Restriction}.  

\begin{lemma}\label{Lemma Strong E-embedding  Restriction}
    Suppose $I \subseteq J \subseteq S$ and that $\ll^I$ and $\ll^J$ are total refinements of $<_B$ restricted to $W^I$ and $W^J$ respectively. Suppose further that $\ll^I$ and $\ll^J$ agree on $W^J$: for all $u,v \in W^J$, $u \ll^I v$ if and only if $u \ll^J v$. Then if $\varphi_\ll^I$ is a strong E-embedding, so is $\varphi_\ll^J$.
\end{lemma}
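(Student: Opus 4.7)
The plan is to verify the strong E-embedding condition for $\varphi_\ll^J$ by transferring it from $\varphi_\ll^I$. First I fix $w\in W$ and $s\in S$ with $w <_R ws$ and pick an arbitrary transposition in the disjoint factorisation of $\varphi_\ll^J(s)$; this has the form $t'=(L_\ll^J(u),L_\ll^J(us))$ for some $u\in W^J$ with $us\in W^J$ and $u <_R us$. By Lemma \ref{Lemma Bruhat in Symmetric group}, the desired inequality $\varphi_\ll^J(w)<_B \varphi_\ll^J(w)t'$ is equivalent to the single claim $(uw)^J\ll^J (usw)^J$, so it suffices to establish this.

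Since $I\subseteq J$ gives $W^J\subseteq W^I$, the same pair $(u,us)$ lies in $W^I$ with $u <_R us$, so $t''=(L_\ll^I(u),L_\ll^I(us))$ appears as a transposition in $\varphi_\ll^I(s)$. Applying the strong E-embedding hypothesis on $\varphi_\ll^I$ together with Lemma \ref{Lemma Bruhat in Symmetric group} yields the analogous $W^I$-level statement $(uw)^I\ll^I (usw)^I$.

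The main step is a Bruhat-based bridge between these two orderings. Note that $uw$ and $usw$ differ by the reflection $t=w^{-1}sw\in T$, so exactly one of $uw<_B usw$ or $usw<_B uw$ holds. If $usw<_B uw$, then Lemma \ref{Lemma Parabolic Projection preserves Bruhat} gives $(usw)^I\le_B (uw)^I$; equality here would force $\varphi_\ll^I(w)^{-1}$ to send the two distinct labels $L_\ll^I(u)$ and $L_\ll^I(us)$ to the same value, contradicting injectivity of the embedding $\varphi_\ll^I$, while strict inequality $(usw)^I<_B (uw)^I$ together with $\ll^I$ refining $<_B$ on $W^I$ would force $(usw)^I\ll^I (uw)^I$, contradicting $(uw)^I\ll^I (usw)^I$. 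Hence $uw<_B usw$.

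A final application of Lemma \ref{Lemma Parabolic Projection preserves Bruhat} gives $(uw)^J\le_B (usw)^J$, and the same injectivity argument (now for $\varphi_\ll^J$) excludes equality, so $(uw)^J<_B (usw)^J$; then the hypothesis that $\ll^J$ refines $<_B$ on $W^J$ upgrades this to $(uw)^J\ll^J (usw)^J$, completing the proof. The step I expect to be the main obstacle is the Bruhat-bridging case distinction, where the direction $usw<_B uw$ has to be ruled out by simultaneously invoking injectivity of the embedding $\varphi_\ll^I$ and Bruhat-refinement of $\ll^I$; the remaining steps are routine bookkeeping via Lemma \ref{Lemma Bruhat in Symmetric group} and Lemma \ref{Lemma Parabolic Projection preserves Bruhat}.
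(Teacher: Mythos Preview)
Your proof is correct and follows the same overall shape as the paper's: fix $w<_R ws$, identify a transposition $(L_\ll^J(u),L_\ll^J(us))$ in $\varphi_\ll^J(s)$, use Lemma~\ref{Lemma Bruhat in Symmetric group} to translate the desired Bruhat inequality into the condition $(uw)^J\ll^J(usw)^J$, lift $(u,us)$ to $W^I$ via $W^J\subseteq W^I$, and use the strong E-embedding hypothesis on $\varphi_\ll^I$ to obtain $(uw)^I\ll^I(usw)^I$.

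The difference lies in the passage from the $W^I$-statement to the $W^J$-statement. The paper's proof simply invokes the agreement of $\ll^I$ and $\ll^J$ on $W^J$ at this point, but since $(uw)^I$ and $(uw)^J$ are in general different elements, that agreement does not apply directly; the paper's argument is at best elliptical here. Your Bruhat-bridging step closes this gap cleanly: from $(uw)^I\ll^I(usw)^I$ you rule out $usw<_B uw$ (using that $\ll^I$ refines $<_B$ together with Lemma~\ref{Lemma Parabolic Projection preserves Bruhat}), conclude $uw<_B usw$, project again via Lemma~\ref{Lemma Parabolic Projection preserves Bruhat} to get $(uw)^J<_B(usw)^J$, and then use that $\ll^J$ refines $<_B$. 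This is the genuinely new idea your argument supplies, and it is exactly where you expected the difficulty.

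Two minor remarks. First, when you exclude the equality cases $(usw)^I=(uw)^I$ and $(usw)^J=(uw)^J$, what you are really using is that $\varphi_\ll^I(w)^{-1}$ (respectively $\varphi_\ll^J(w)^{-1}$) is a permutation of the label set; the phrase ``injectivity of the embedding'' slightly overstates what is needed, since the lemma does not assume $W_J$ is core-free. Second, note that the paper's proof writes $(uw^{-1})^I$ where your computation (consistent with the definition of $\varphi_\ll^J$ and Lemma~\ref{Lemma Bruhat in Symmetric group}) gives $(uw)^I$; your version is the correct one.
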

\begin{proof}
Recall that since $I \subseteq J$, it follows that $W^J \subseteq W^I$.
Now suppose that $\varphi_\ll^I$ is a strong E-embedding and take some $w \in W$ and $s \in S$ such that $w <_R ws$. Since $\varphi_\ll^I$ is an E-embedding, $$\varphi_\ll^I(w) <_B \varphi_\ll^I(w)(L_\ll^I(u), L_\ll^I(us))$$ for all $u,us \in W^I$ such that $u^I \ll^I (us)^I$. Equivalently, by Lemma \ref{Lemma Bruhat in Symmetric group}, $(uw^{-1})^I \ll^I (usw^{-1})^I$. But since $\ll^I$ and $\ll^J$ agree on $W^J \subseteq W^I$, it's true that $(uw^{-1})^J \ll^J (usw^{-1})^J$ also for all $u,us \in W^J$ such that $u^J \ll^J (us)^J$. Therefore $\varphi_\ll^I$ is also a strong E-embedding.
\end{proof}

\begin{proof}[Proof of Theorem \ref{Theorem Bruhat Equivelence}.]
%$\\ \, \\$
    First, we show that total refinements of the Bruhat order give rise to strong E-embeddings. We focus on the case when $J = \emptyset$ before extending it to all $J \subseteq S$. 
    
    For all $w \in W$ and $t \in T$ such that $w <_B wt$, we need to show that $$\varphi(w) <_B \varphi(w) (L_\ll(u), L_\ll(ut))$$ for all $u \in W$ such that $u <_B ut$. Since $\ll$ is a total refinement of $<_B$, $L_\ll(u) < L_\ll(ut)$ and thus proving $\varphi(w) < \varphi(w) (L_\ll(u), L_\ll(ut))$ is equivalent to showing $uw^{-1} \ll utw^{-1}$. But by Lemma \ref{Lemma Bruhat t sandwich}, $uw^{-1} <_B utw^{-1}$ and so $uw^{-1} \ll utw^{-1}$. 

     Now we extend the results to all $J \subseteq S$. Every total refinement of the Bruhat order on $W^J$ is a restriction of some total refinement of the Bruhat order on $W$. The implication now follows from Lemma \ref{Lemma Strong E-embedding  Restriction}.
    
    Conversely, suppose for contradiction, that $\varphi_\ll^J$ is a strong E-embedding but $\ll^J$ is \textit{not} a refinement of the Bruhat order restricted to $W^J$. Then there exists some $w\in W^J$, $t \in T$ such that $wt \in W^J$, $w <_B wt$ but $wt \ll^J w$. Since $\varphi_\ll^J$ is a strong E-embedding, $w <_B wt$ implies that $$\varphi_\ll^J(w) <_B \varphi_\ll^J(w)(L_\ll^J(wt), L_\ll^J(w)).$$
    But by Lemma \ref{Lemma Bruhat in Symmetric group}, this is equivalent to $(wtw^{-1})^J \ll(ww^{-1})^J$. But this contradicts the fact that $L_\ll^J(1)=0$.
\end{proof}

Theorem \ref{Theorem Bruhat Refining iff see} follows from Theorem \ref{Theorem Bruhat Equivelence} on setting $J = \emptyset$.

\section{Constructing Elnitsky's Tilings}\label{Section Constructing Elnitsky's Tilings}
Using Theorem \ref{Theorem Bruhat Equivelence} we now have a construction for producing new E-tilings for any reduced word from any finite Coxeter group, thus proving Theorem \ref{maintheorem}
\begin{proof}
    The construction works as follows:
\begin{enumerate}[$(i)$]
    \item Choose some parabolic subgroup $W_J < W$ of our Coxeter group $W$ which is core-free (and so $J \ne S$);
    \item Produce a total order $\ll$ of  $W^J$ that is a refinement of the Bruhat order. A particular method of doing this is outlined in \cite{nicolaidesrowley3};
    \item Construct $\phi_\ll^J$ as defined in Section 2. By Theorem \ref{Theorem Bruhat Equivelence} this is a strong E-embedding;
    \item By Proposition \ref{Proposition Strong E-embeddings are E-embeddings}, $\phi_\ll^J$ is an E-embedding, and therefore by Theorem 1.2 of \cite{nicolaidesrowley1}, for any reduced word of any element of $W$, there exists an associated E-tiling.
\end{enumerate}
\end{proof}

We stress that the proof of Theorem \ref{maintheorem} does not suggest that all E-embeddings must be constructed in this way, although we are yet to find a counterexample.

In the remainder of this section we apply the proof of Theorem \ref{maintheorem} in three scenarios. First, we re-examine Elnitsky's original tilings, realising them as strong-E-Embeddings. Next, we concentrate on examples of reducible finite Coxeter groups. Finally, we give two new tilings for the longest element of $\mathrm{E}_8$.

%We now have a consistent means for producing new tilings for Coxeter group 

\subsection{Tilings for $\mathrm{D}_5$}
In \cite{nicolaidesrowley1}, it is shown how each E-embedding has an associated tiling system. Coincidentally, all of Elnitsky's original tilings correspond to strong-E-embeddings that are actions on the distinguished elements of specific parabolic subgroups. We showcase this for the group $\mathrm{D}_5$ but note that the details for realizing his remaining tiling systems as strong-E-embeddings are much the same. 

Let $W$ be the Coxeter group $\mathrm{D}_5$ with $\Gamma(W)$ given by  \newline
\begin{center}
    
\begin{tikzpicture}[scale=0.5]
\draw (2,0-4-4) -- (6,0-4-4);
\draw (2,0-4-4) -- (0,1-4-4);
\draw (2,0-4-4) -- (0,-1-4-4);

\node (s1) at (0,1-4-4-0.6) {$s_1$};
\node (s1) at (0,-1-4-4-0.6) {$s_2$};
\node (s1) at (2,-0.6-4-4) {$s_3$};
\node (s1) at (4,-0.6-4-4) {$s_4$};
\node (s1) at (6,-0.6-4-4) {$s_5$};

\filldraw [black] (0,1-4-4) circle (5pt);
\filldraw [black] (0,-1-4-4) circle (5pt);
\filldraw [black] (2,-0-4-4) circle (5pt);
\filldraw [black] (4,-0-4-4) circle (5pt);
\filldraw [black] (6,-0-4-4) circle (5pt);
\end{tikzpicture}
\end{center}

We give a very brief overview (with minor adjustments for convenience) of Elnitsky's tiling system for $\mathrm{D}_5$, valuing brevity over depth. For a more thorough and comprehensive introduction, we point the reader to \cite{elnitskyThesis}, \cite{elnitsky},  \cite{nicolaidesrowley2}, and  \cite{tenner1}. 

Consider the embedding of $\mathrm{D}_5$ into $\Sym{10}$ where $s_1 = (5,7)(4,6)$ and $s_i = (6 - i, 7 - i )(4 + i, 5 + i)$ for each $i \in \{2, \dots, 5 \}$. As a permutation representation, it's represented by the graph in Figure \ref{Figure D5 sym10}.
\begin{figure}[H]
   \centering
\begin{tikzpicture}[scale=0.9]
\draw (2,0-4-4) -- (8,0-4-4);
\draw (2,0-4-4) -- (0,1-4-4);
\draw (2,0-4-4) -- (0,-1-4-4);
\draw (-2,0-4-4) -- (-8,0-4-4);
\draw (-2,0-4-4) -- (0,1-4-4);
\draw (-2,0-4-4) -- (0,-1-4-4);

\filldraw [white, draw=black] (0,1-4-4) circle (8pt);
\filldraw [white, draw=black] (0,-1-4-4) circle (8pt);
\filldraw [white, draw=black] (2,-0-4-4) circle (8pt);
\filldraw [white, draw=black] (4,-0-4-4) circle (8pt);
\filldraw [white, draw=black] (6,-0-4-4) circle (8pt);
\filldraw [white, draw=black] (8,-0-4-4) circle (8pt);
\filldraw [white, draw=black] (-2,-0-4-4) circle (8pt);
\filldraw [white, draw=black] (-4,-0-4-4) circle (8pt);
\filldraw [white, draw=black] (-6,-0-4-4) circle (8pt);
\filldraw [white, draw=black] (-8,-0-4-4) circle (8pt);

\node (s1) at (0,1-4-4-0.0) {$5$};
\node (s1) at (0,-1-4-4-0.0) {$6$};
\node (s1) at (2,-0.0-4-4) {$7$};
\node (s1) at (4,-0.0-4-4) {$8$};
\node (s1) at (6,-0.0-4-4) {$9$};
\node (s1) at (8,-0.0-4-4) {$10$};
\node (s1) at (-2,-0.0-4-4) {$4$};
\node (s1) at (-4,-0.0-4-4) {$3$};
\node (s1) at (-6,-0.0-4-4) {$2$};
\node (s1) at (-8,-0.0-4-4) {$1$};

\node (s1) at (0+1.25,1-4-4-.25) {$s_1$};
\node (s1) at (0-1.25,-1-4-4-0.0+0.25) {$s_1$};
\node (s1) at (0-1.25,1-4-4-.25) {$s_2$};
\node (s1) at (0+1.25,-1-4-4-0.0+0.25) {$s_2$};
\node (s1) at (3,-0.0-4-4 - 0.3) {$s_3$};
\node (s1) at (-3,-0.0-4-4 - 0.3) {$s_3$};
\node (s1) at (5,-0.0-4-4 - 0.3) {$s_4$};
\node (s1) at (-5,-0.0-4-4 - 0.3) {$s_4$};
\node (s1) at (7,-0.0-4-4 - 0.3) {$s_5$};
\node (s1) at (-7,-0.0-4-4 - 0.3) {$s_5$};

\end{tikzpicture} 
   \caption{Permutation representation of $\mathrm{D_5}$ in $\Sym{10}$.}
   \label{Figure D5 sym10}
\end{figure}

For all $w \in \mathrm{D}_5$, define $\mathrm{P}(w)$ to be the (possibly deformed) 20-gon made like so:
\begin{enumerate}[$(i)$]
    \item Let $\mathrm{L}$ be the lower-most vertex of our $20$-gon. 
    \item Let the first $10$ edges clockwise from $\mathrm{L}$ be those of the regular $20$-gon with unit length edges, labelling them with $\{1,2,\ldots,10\}$ respectively. Let $\mathrm{X}$ denote the horizontal line passing through the $5^{\text{th}}$ vertex clockwise from $\mathrm{L}$ in this way.
    \item  Construct and label the $10$ edges anti-clockwise from $\mathrm{L}$  so that the $i^{th}$ edge from $\mathrm{L}$ in this direction is a unit length edge parallel to the edge labelled $(i)w^{-1}$ from step $(ii)$. 
\end{enumerate}

We present an example in Figure \ref{PolygonD10} for $w = (1,10)(2,9)(3,8)(4,7)$.

\begin{figure}
    \centering
    \includegraphics[height = 9cm]{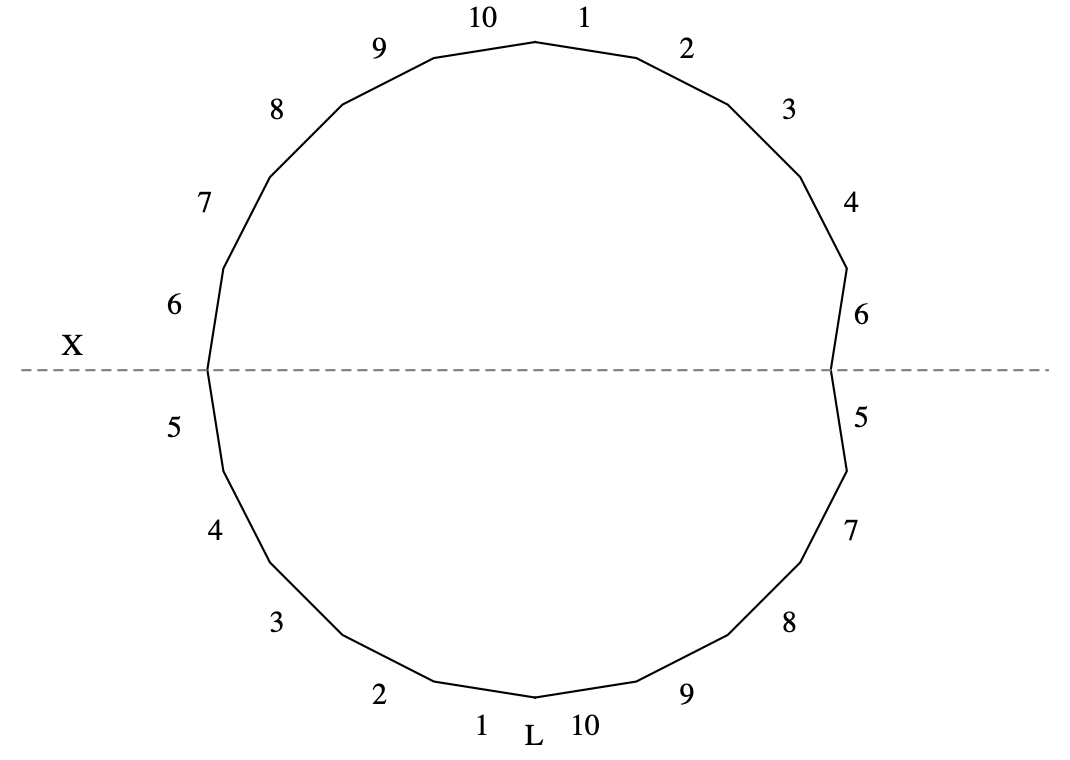}
\caption{The polygon $\mathrm{P}((1,10)(2,9)(3,8)(4,7))$.}
    \label{PolygonD10}
\end{figure}

For each $w \in W$, Elnitsky is interested in tilings of $\mathrm{P}(w)$ that are made by a restricted set of tiles satisfying certain rules. The tiles placed can either be rhombi or a so-called \textit{megatile} (described in detail below) but the tiling must be symmetric about $\mathrm{X}$.

A megatile is an octagonal tile satisfying the following properties: 
\begin{enumerate}[$(i)$]
\item Its edges are unit length and it must be placed so that it is symmetric through $\mathrm{X}$;
    \item Its upper-most vertex, $\mathrm{U}_0$, and lower-most vertex, $\mathrm{L}_0$, must lie on a vertical line;
    \item Let $E_0$ denote the first four edges anti-clockwise from $\mathrm{U}_0$, then the remaining four edges can be obtained like so: reflect $E_0$ through the vertical line passing through $\mathrm{U}_0$ and $\mathrm{L}_0$ before 
    transposing the resulting first and second pair of edges, and the third and fourth pair of edges.
\end{enumerate}

We present two such tilings for the same polygon in Figure \ref{TilingsD10}.

\begin{figure}
\begin{subfigure}[b]{0.5\textwidth}
    \centering
    \includegraphics[height = 6cm]{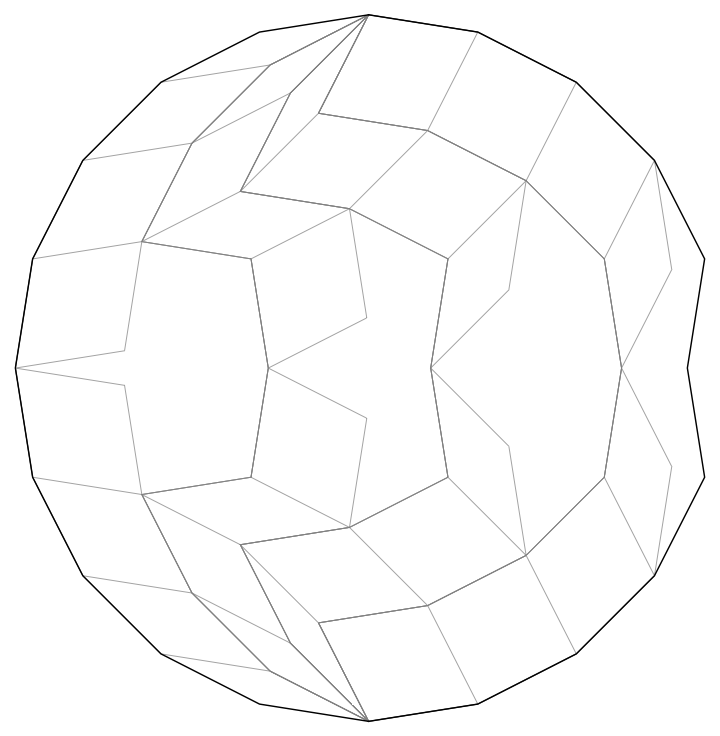}
\end{subfigure}
\begin{subfigure}[b]{0.5\textwidth}
    \centering
    \includegraphics[height = 6cm]{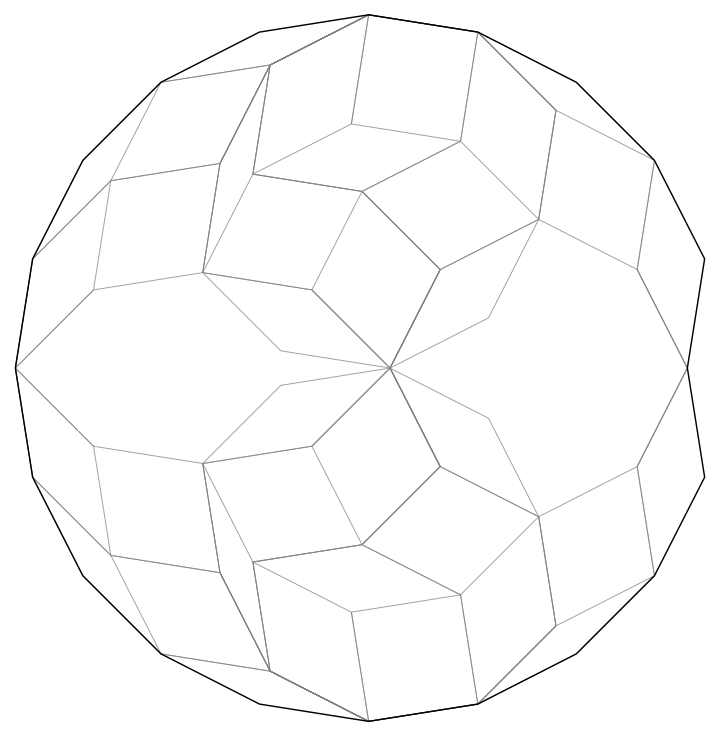}
\end{subfigure}
\caption{Two Elnitsky tilings for $\mathrm{P}((1,10)(2,9)(3,8)(4,7))$.}
    \label{TilingsD10}
\end{figure}

Theorem 7.1 of \cite{elnitsky} gives a bijection between classes of the reduced words of $\mathrm{D}_n$ and tilings of the kind described above. To produce a tiling from a reduced word, the word is read from left to right with each subsequent generator corresponding to instructions for placing the next set of tiles. Reading the generator $s_1$ compels us to place a megatile with the edges $E_0$ of the megatile are formed by taking the right-most edges that are currently the $4^{th}, 5^{th}, 6^{th}$ and $7^{th}$ edges from $\mathrm{L}$ respectively. For $i>1$, $s_i$ is associated with placing a rhombic tile sharing the right-most edges placed in the $i^{th}$ and $(i-1)^{th}$ position from $\mathrm{L}$ and its mirror image through $\mathrm{X}$. The two Elnitsky tiling s of Figure \ref{Figure D5 sym10} are derived from the reduced words  $$s_5 s_4 s_3 s_2 s_1 s_5 s_4 s_3 s_2 s_1 s_5 s_4 s_3 s_2 s_1 s_5 s_4 s_3 s_2 s_1$$ and $$s_5 s_3 s_4 s_2 s_3 s_1 s_2 s_4 s_3 s_2 s_5 s_4 s_5 s_3 s_4 s_2 s_1 s_3 s_4 s_2,$$ respectively. We give an explicit step-by-step illustration of how to produce the tiling for $s_5 s_3 s_4 s_2 s_3 s_1 s_2 s_4 s_3 s_2 s_5 s_4 s_5 s_3 s_4 s_2 s_1 s_3 s_4 s_2$ in Figure \ref{Figure TilingD10Sequence}.

\begin{figure}
    \centering
    \includegraphics[width = 15cm]{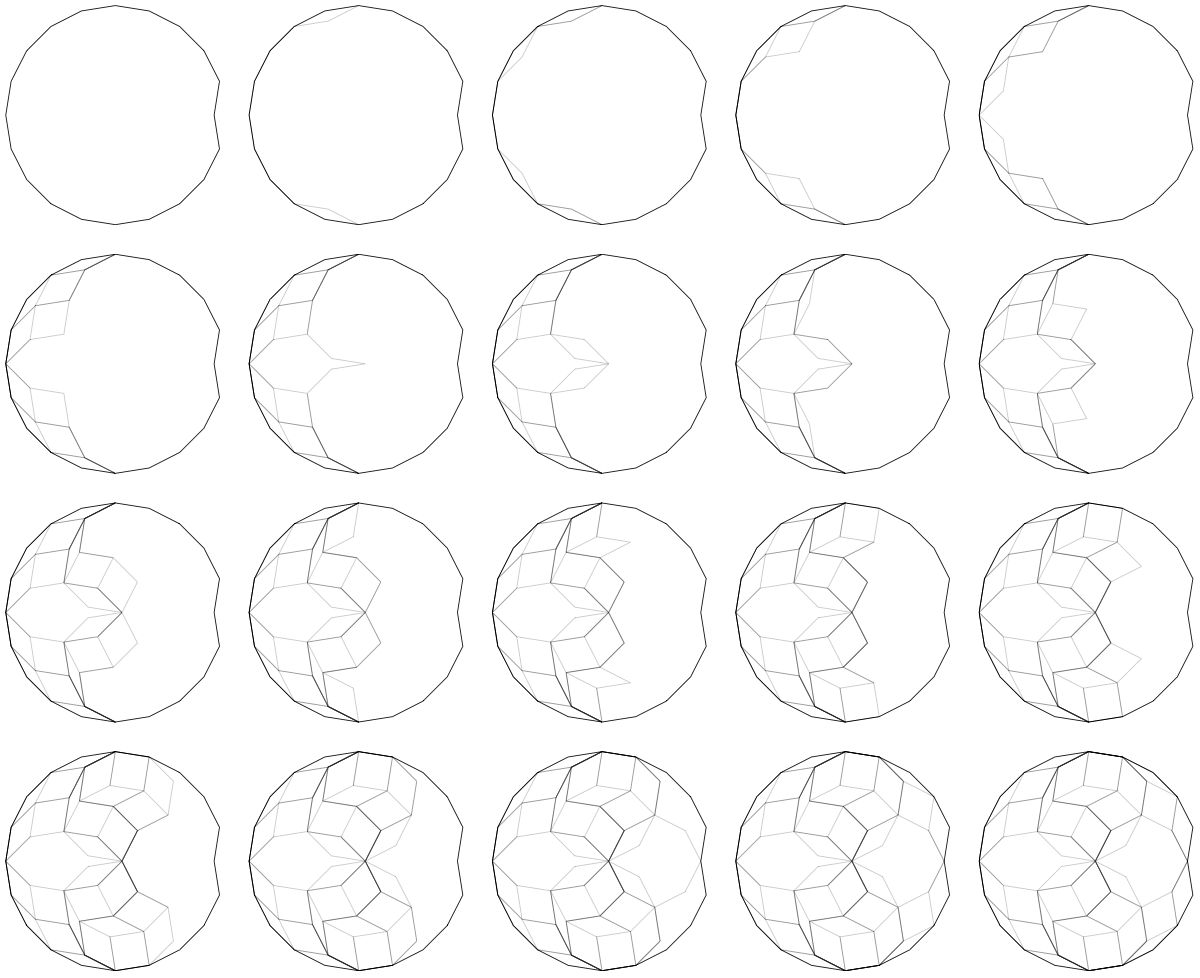}
\caption{A step-by-step illustration of the intermediate tilings associated to the reduced word $s_5 s_3 s_4 s_2 s_3 s_1 s_2 s_4 s_3 s_2 s_5 s_4 s_5 s_3 s_4 s_2 s_1 s_3 s_4 s_2$ using Elnitsky's method (read from left to right, top to bottom).}
    \label{Figure TilingD10Sequence}
\end{figure}

We now pivot to reconstructing the same tilings from the perspective of strong-E-embeddings and the Bruhat order. Start by considering $\mathrm{D}_4$ as the index 10, standard parabolic subgroup of $\mathrm{D}_5$. That is, let $J = \{s_1,s_2,s_3,s_4\}$ so  $W_J= \langle s_1,s_2,s_3,s_4\rangle$. We display the Hasse Diagram of the Bruhat order restricted to $W^J$ in Figure \ref{Figure D4 D5 Bruhat}: one element lies vertically above another if and only if the higher element is greater than the lower in the Bruhat order (whilst elements of the same height are incomparable in the Bruhat Order). 

\begin{figure}[H]
   \centering
\begin{tikzpicture}[scale=1.5]
\draw (0,0) -- (0,3);
\draw (0,3) -- (0.7, 3.7);
\draw (0,3) -- (-0.7, 3.7);
\draw (0.7, 3.7) -- (0,4.4);
\draw (-0.7, 3.7) -- (0,4.4);
\draw (0,4.4) -- (0,7.4);

\filldraw [black, draw=black] (0,0) circle (2pt);
\filldraw [black, draw=black] (0,1) circle (2pt);
\filldraw [black, draw=black] (0,2) circle (2pt);
\filldraw [black, draw=black] (0,3) circle (2pt);
\filldraw [black, draw=black] (0.7, 3.7) circle (2pt);
\filldraw [black, draw=black] (-0.7, 3.7) circle (2pt);
\filldraw [black, draw=black] (0,4.4) circle (2pt);
\filldraw [black, draw=black] (0,5.4) circle (2pt);
\filldraw [black, draw=black] (0,6.4) circle (2pt);
\filldraw [black, draw=black] (0,7.4) circle (2pt);

\node (s1) at (0.5,-0.25) {1};
\node (s1) at (0.55,1-0.25) {$s_5$};
\node (s1) at (0.6,2-0.25) {$s_5s_4$};
\node (s1) at (0.65,3-0.25) {$s_5s_4s_3$};
\node (s1) at (1.5,3.7-0.25) {$s_5s_4s_3s_2$};
\node (s1) at (-1.5,3.7-0.25) {$s_5s_4s_3s_1$};
\node (s1) at (0.9,5-0.4) {$s_5s_4s_3s_1s_2$};
\node (s1) at (1,6-0.4) {$s_5s_4s_3s_1s_2s_3$};
\node (s1) at (1.1,7-0.4) {$s_5s_4s_3s_1s_2s_3s_4$};
\node (s1) at (1.2,8-0.4) {$s_5s_4s_3s_1s_2s_3s_4s_5$};

\end{tikzpicture} 
   \caption{The Hasse diagram of the Bruhat Order restricted to the minimal right cosets representatives of $\mathrm{D}_4$ in $\mathrm{D}_5$.}
   \label{Figure D4 D5 Bruhat}
\end{figure}
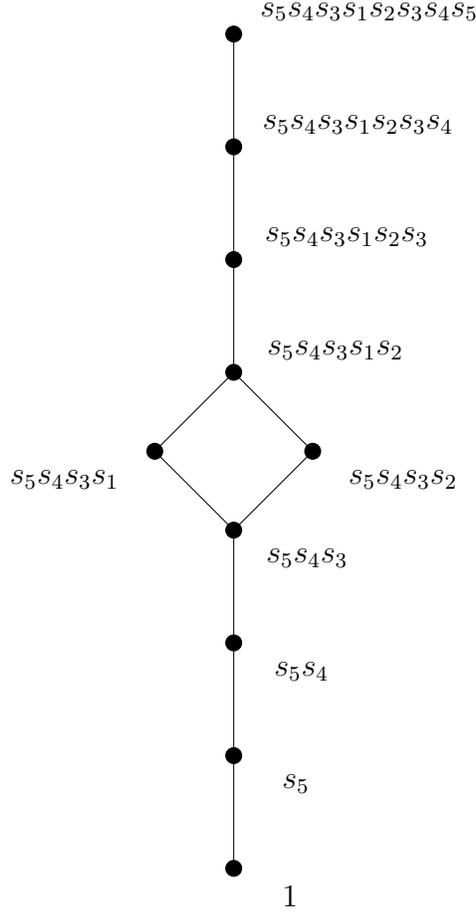

Now we need to choose some total ordering of our minimal representatives that is a refinement of the Bruhat order. There are exactly two such refinements:
\begin{enumerate}[$(i)$]
    \item $1 \ll s_5 \ll s_5s_4 \ll s_5s_4s_3 \ll s_5s_4s_3s_2 \ll s_5s_4s_3s_1 \ll s_5s_4s_3s_1s_2$
    $\ll s_5s_4s_3s_1s_2s_3 \ll s_5s_4s_3s_1s_2s_3s_4 \ll s_5s_4s_3s_1s_2s_3s_4s_5 $; 
    and
    \item $1 \ll s_5 \ll s_5s_4 \ll s_5s_4s_3 \ll s_5s_4s_3s_1 \ll s_5s_4s_3s_2 \ll s_5s_4s_3s_1s_2 $ $\ll s_5s_4s_3s_1s_2s_3 \ll s_5s_4s_3s_1s_2s_3s_4 \ll s_5s_4s_3s_1s_2s_3s_4s_5$.
\end{enumerate}

For this small example, finding and choosing a refinement is a manageable task. For larger examples, however, this proves to be a more difficult task to do satisfactorily. Fortunately, we can always use an instance of Prim's greedy algorithm to create such a refinement, namely the deletion order. Details of this are given in \cite{nicolaidesrowley3}.
Choosing the first of the two refinements to be $\ll$, we can make  $\varphi_\ll^J$ as in Section \ref{Section background}. Explicitly,
\begin{align*}
    \varphi_\ll^J(s_1) &= (L_\ll^J(s_5s_4s_3), L_\ll^J(s_5s_4s_3s_1)) (L_\ll^J(s_5s_4s_3s_2), L_\ll^J(s_5s_4s_3s_1s_2))\\
    &= (4,6)(5,7)\\
    \varphi_\ll^J(s_2) &= (L_\ll^J(s_5s_4s_3), L_\ll^J(s_5s_4s_3s_2)) (L_\ll^J(s_5s_4s_3s_1), L_\ll^J(s_5s_4s_3s_1s_2))\\
    &= (4,5)(6,7);\\
    \varphi_\ll^J(s_3) &= (L_\ll^J(s_5s_4), L_\ll^J(s_5s_4s_3)) (L_\ll^J(s_5s_4s_3s_1s_2), L_\ll^J(s_5s_4s_3s_1s_2s_3))\\
    &= (3,4)(7,8);\\
    \varphi_\ll^J(s_4) &= (L_\ll^J(s_5), L_\ll^J(s_5s_4)) (L_\ll^J(s_5s_4s_3s_1s_2s_3), L_\ll^J(s_5s_4s_3s_1s_2s_3s_4))\\
    &= (2,3)(8,9);\\
    \varphi_\ll^J(s_4) &= (L_\ll^J(1), L_\ll^J(s_5)) (L_\ll^J(s_5s_4s_3s_1s_2s_3s_4), L_\ll^J(s_5s_4s_3s_1s_2s_3s_4s_5))\\
    &= (1,2)(9,10).
\end{align*}  The permutation representation graph of $\varphi_\ll^J$ is therefore the same as that used in Elnitsky's construction in Figure \ref{Figure D5 sym10}. By Theorem \ref{Theorem Bruhat Refining iff see}, $\varphi_\ll^J$ is a strong-E-embedding and thus, by Proposition \ref{Proposition Strong E-embeddings are E-embeddings}, an E-embedding. Using the methods in \cite{nicolaidesrowley1}, we can derive the tiling system associated to $\varphi_\ll^J$ that transforms reduced words into E-tilings. This system is exactly the same as Elnitsky's tiling method for type $\mathrm{D}$.

Elnitsky's other tiling systems can be realized using the standard parabolic subgroups $\mathrm{A}_{n-1}$ in $\mathrm{A}_n$ and $\mathrm{A}_{n-1}$ in $\mathrm{B}_n$.
\subsection{Tilings of finite reducible Coxeter groups}
Here we show how the steps outlined in the proof of Theorem \ref{maintheorem} work equally well for the finite reducible Coxeter groups as opposed to their irreducible counterparts. By way of example, we consider the reducible Coxeter group $W = \mathrm{A}_2 \times \mathrm{A}_3$. We label and order the generators like so:

\begin{center}
    
\begin{tikzpicture}[scale=0.5]
\draw (-2,0-4-4) -- (0,0-4-4);

\draw (2,0-4-4) -- (6,0-4-4);

\node (s1) at (-2,-0.6-4-4) {$s_1$};
\node (s1) at (0,-0.6-4-4) {$s_2$};

\node (s1) at (2,-0.6-4-4) {$s_3$};
\node (s1) at (4,-0.6-4-4) {$s_4$};
\node (s1) at (6,-0.6-4-4) {$s_5$};

\filldraw [black] (-2,-0.0-4-4) circle (5pt);
\filldraw [black] (0,-0.0-4-4) circle (5pt);
\filldraw [black] (2,-0-4-4) circle (5pt);
\filldraw [black] (4,-0-4-4) circle (5pt);
\filldraw [black] (6,-0-4-4) circle (5pt);
\end{tikzpicture}
\end{center}

Taking $J = \{s_1,s_3,s_4\}$, the Bruhat order restricted to $W^J$ is depicted in Figure \ref{Figure Bruhat A2A3}.

\begin{figure}[H]
    \centering
    
\begin{tikzpicture}[scale=1, rotate = 45]

\filldraw [black] (0,0) circle (5pt);
\filldraw [black] (0,4) circle (5pt);
\filldraw [black] (0,8) circle (5pt);
\filldraw [black] (0,12) circle (5pt);
\filldraw [black] (4,0) circle (5pt);
\filldraw [black] (4,4) circle (5pt);
\filldraw [black] (4,8) circle (5pt);
\filldraw [black] (4,12) circle (5pt);
\filldraw [black] (8,0) circle (5pt);
\filldraw [black] (8,4) circle (5pt);
\filldraw [black] (8,8) circle (5pt);
\filldraw [black] (8,12) circle (5pt);

\draw (0,0) -- (0,12);
\draw (4,0) -- (4,12);
\draw (8,0) -- (8,12);
\draw (0,0) -- (8,0);
\draw (0,4) -- (8,4);
\draw (0,8) -- (8,8);
\draw (0,12) -- (8,12);

\node (s1) at (0-0.6,0-0.8) {$1$};
\node (s1) at (4-0.6,0-0.8) {$s_2$};
\node (s1) at (8+0.6,0+0.8) {$s_2s_1$};
\node (s1) at (0-0.8,4-0.8) {$s_5$};
\node (s1) at (4-0.8,4-0.8) {$s_2s_5$};
\node (s1) at (8+0.8,4+0.8) {$s_2s_1s_5$};
\node (s1) at (0-1,8-0.8) {$s_5s_4$};
\node (s1) at (4-1,8-0.8) {$s_2s_5s_4$};
\node (s1) at (8+0.8,8+0.8) {$s_2s_1s_5s_4$};
\node (s1) at (0-1.2,12-0.8) {$s_5s_4s_3$};
\node (s1) at (4-1.2,12-0.8) {$s_2s_5s_4s_3$};
\node (s1) at (8+1.2,12+0.9 ){$s_2s_1s_5s_4s_3$};

\end{tikzpicture}

    \caption{A Hasse Diagram for the Bruhat order restricted to $W^J$ for $\mathrm{A}_2 \times \mathrm{A}_3$.}
    \label{Figure Bruhat A2A3}
\end{figure}
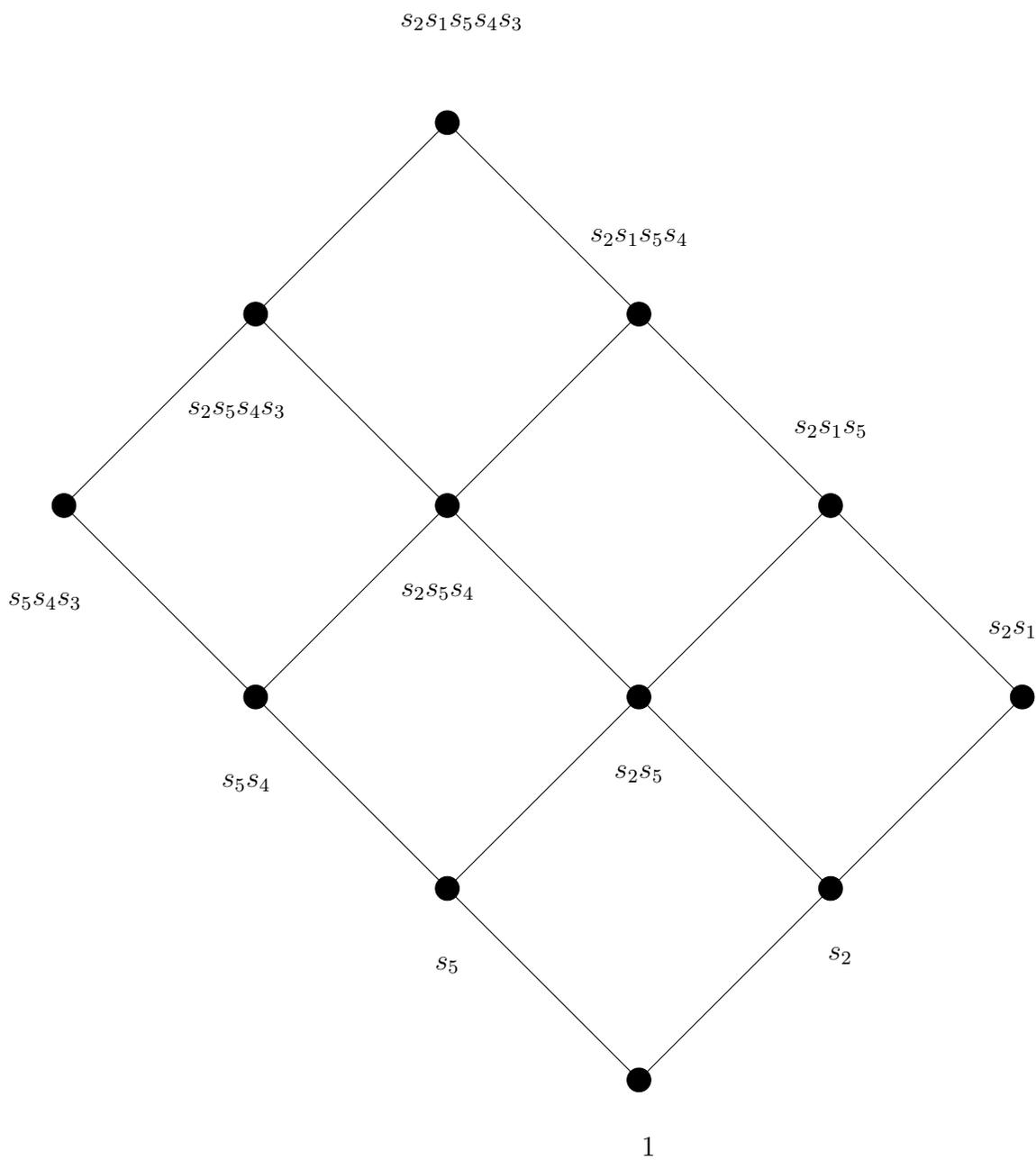

Taking the refinement $$1 \ll s_2 \ll s_2s_1 \ll s_5 \ll s_2s_5 \ll s_2s_1s_5 \ll s_5s_4 \ll s_2s_5s_4 $$ $$\ll s_2s_1s_5s_4 \ll s_5s_4s_3 \ll s_5s_4s_3s_2 \ll s_5s_4s_3s_2s_1,$$ say, we create the strong E-embedding $\varphi^J_\ll$ such that 
\begin{align*}
    \varphi^J_\ll(s_1) &= (2,3)(5,6)(8,9)(11,12);\\
    \varphi^J_\ll(s_2) &= (1,2)(4,5)(7,8)(10,11);\\
    \varphi^J_\ll(s_3) &= (7,10)(8,11)(9,12);\\
    \varphi^J_\ll(s_4) &= (4,7)(5,8)(6,9);\\
    \varphi^J_\ll(s_5) &= (1,4)(2,5)(3,6).
\end{align*}

Now we can use $\varphi^J_\ll$ to make E-tilings as Figure \ref{Figure A2A3 tiling} shows.

\begin{figure}[H]
    \centering
    \includegraphics[width = 8cm]{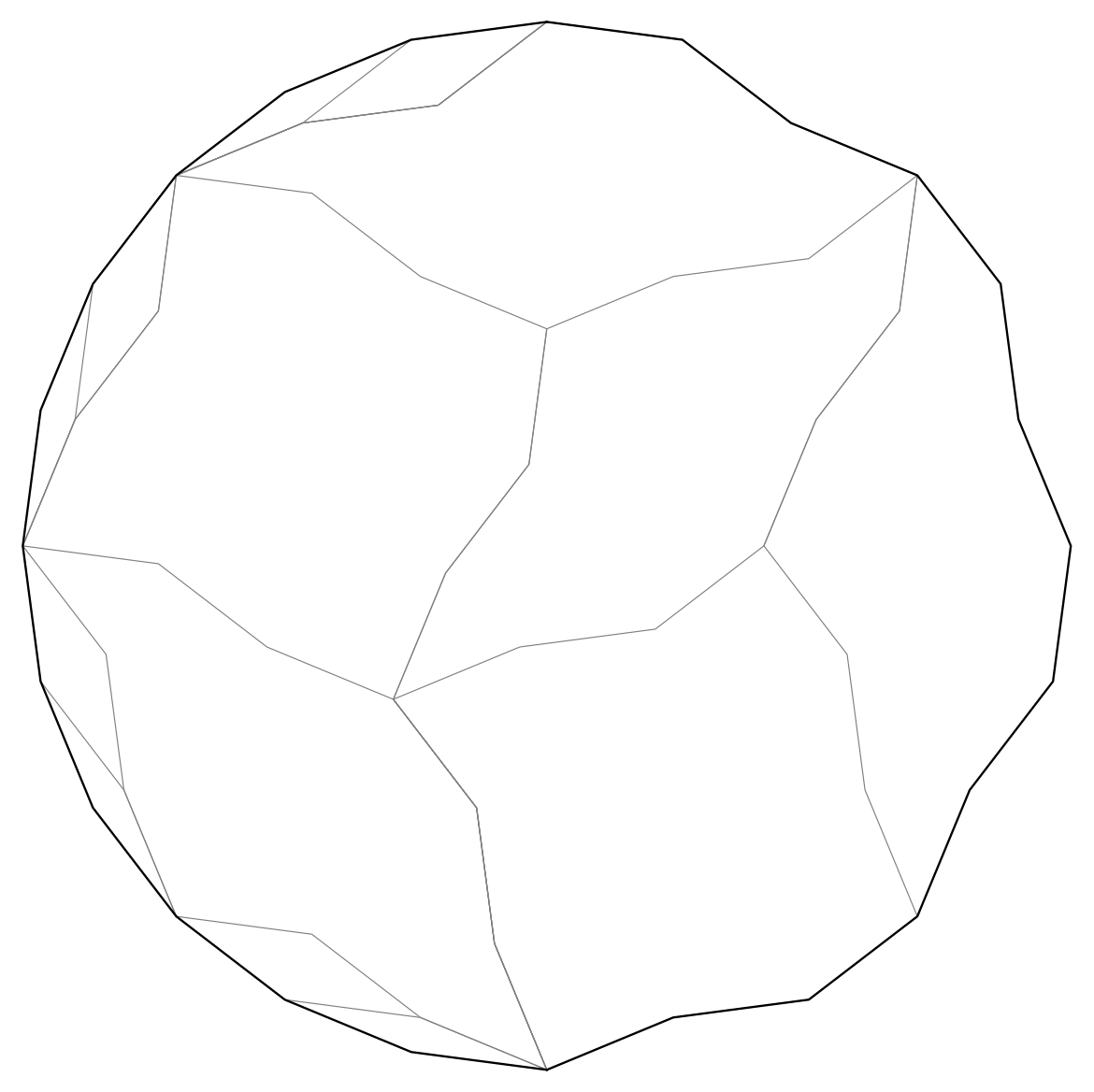}
    \caption{An E-tiling for the reduced word $s_1s_2s_3s_4s_5s_4s_3s_4$ in the Coxeter group $\mathrm{A}_2 \times \mathrm{A}_3$.}
    \label{Figure A2A3 tiling}
\end{figure}

\subsection{New tilings for $\mathrm{E}_8$}
Let $W$ be the Coxeter group $\mathrm{E}_8$ and let $\omega_0$ be the longest element of $W$.   We label $\Gamma(W)$ as  on page 16 of \cite{Iwohari-Hecke} and we order the elements of $S$ thus $s_1<s_2<\ldots<s_8$.  We follow our general procedure to produce new tilings  for $\mathrm{E}_8$ using the standard parabolic subgroup $\mathrm{E}_7$. 

We select two reduced words of $\omega_0$ to make our specific tilings.  Here $w_0$ denotes the longest element of $\mathrm{E}_8$. The first is made by taking the concatenation of 15 copies of $c=s_1s_2s_3s_4s_5s_6s_7s_8$ (a Coxeter element of $W$). One can prove that $\omega_0 = c^{15}$ and that this is indeed a reduced word. The tiling for this is given in Figure \ref{Figure E8 tiling cox elt}. The second tiling is produced by taking the least reduced word of $\omega_0$ with respect to the lexicographic (from right to left) ordering induced from the ordering of $S$. 

\begin{figure}
\begin{center}
\includegraphics[scale=0.7]{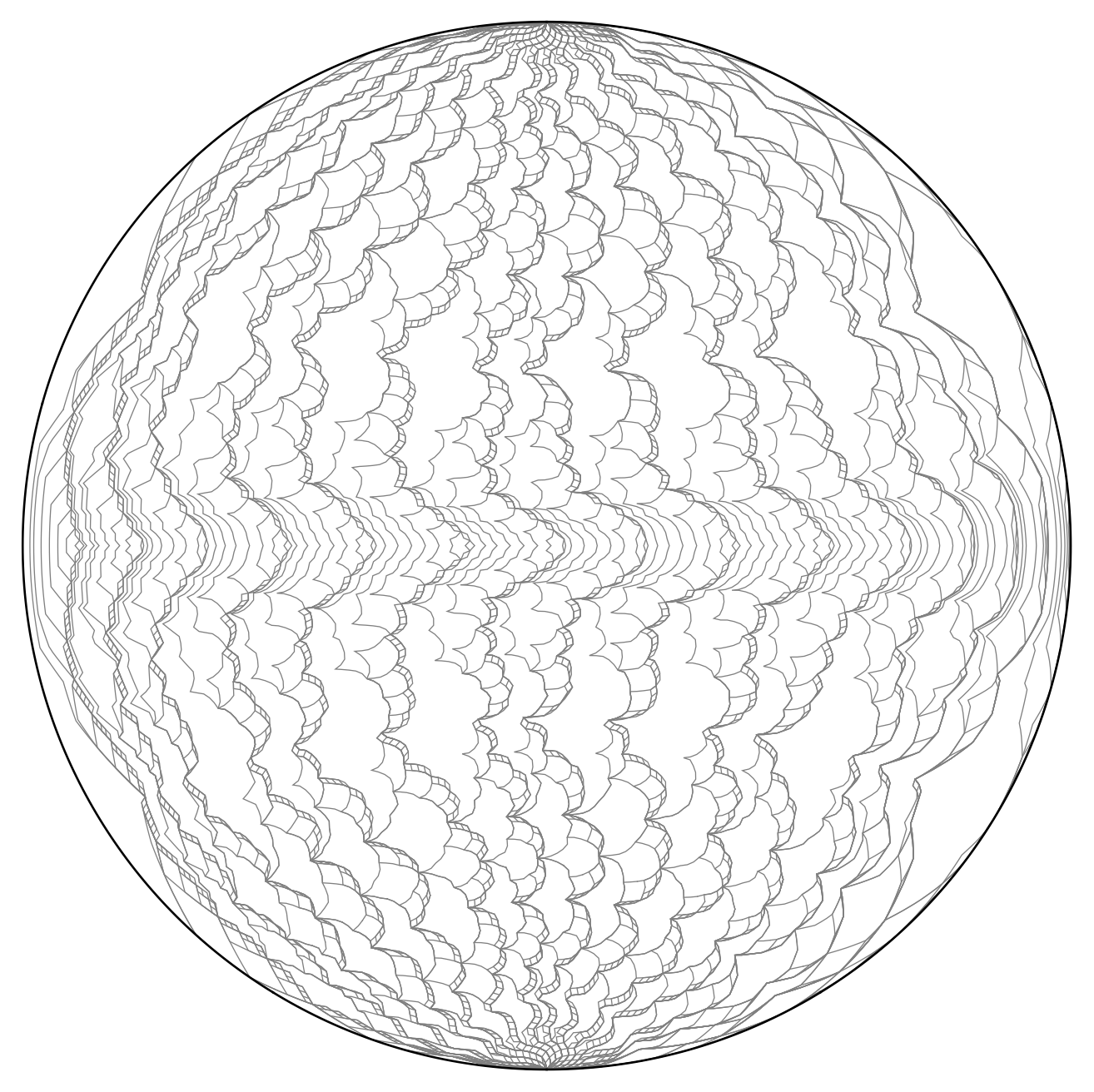}
\end{center}
\caption{E-tiling for the the reduced exprssion $c^{15}$ of $w_0$.}
\label{Figure E8 tiling cox elt}
\end{figure}
\newpage
\begin{figure}[H]
\begin{center}
\includegraphics[scale=0.7]{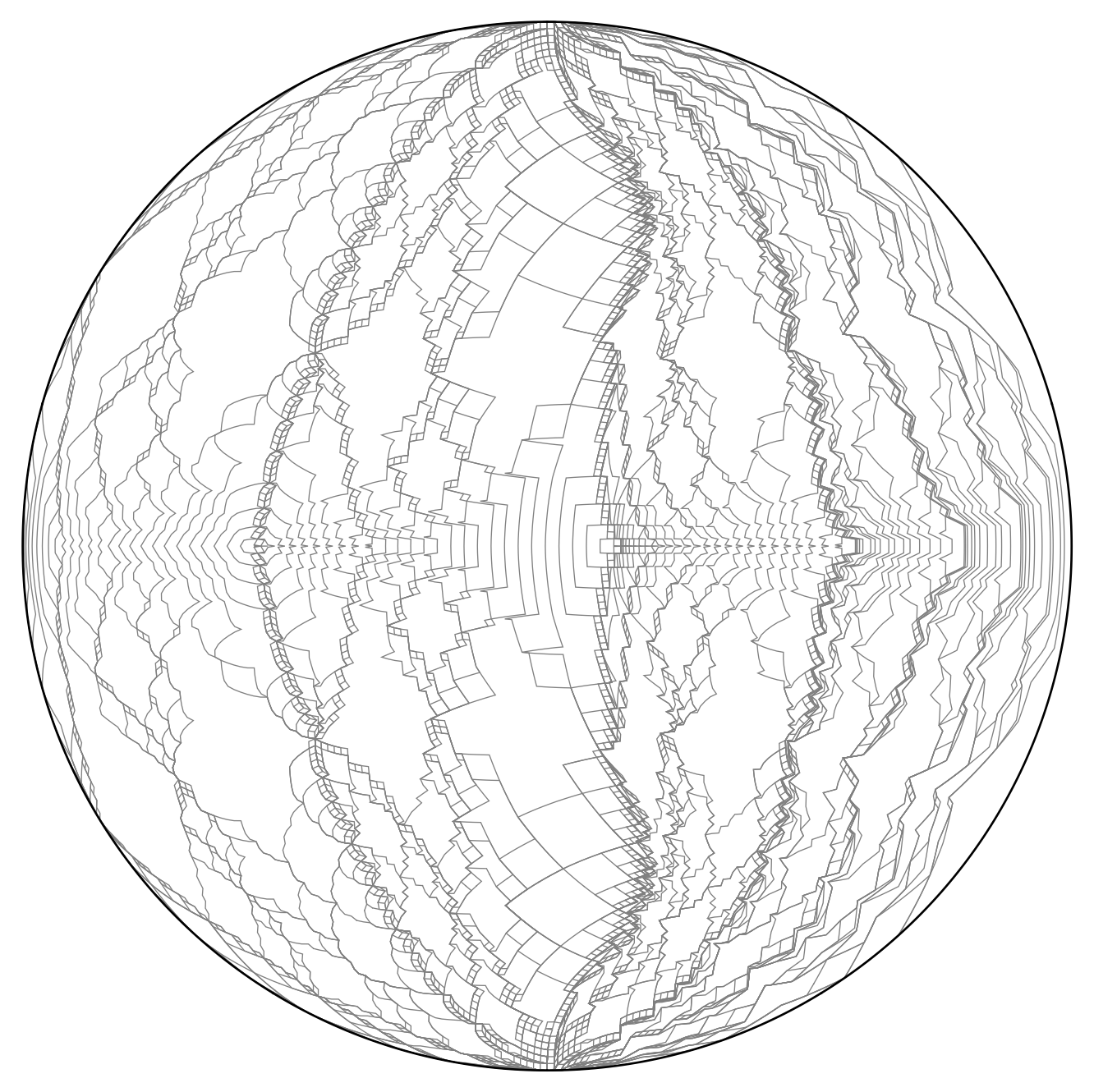}
\end{center}
\caption{E-tiling for the reduced expression least reduced word of $\omega_0$ with respect to the right to left lexicographic ordering.}

\end{figure}

\end{document}